\title{Dyadic Torsion of 2-Dimensional Hyperelliptic Jacobians}
\author{Jeffrey Yelton}
\newtheorem{thm}{Theorem}[section]
\newtheorem{prop}[thm]{Proposition}
\newtheorem{lemma}[thm]{Lemma}
\newtheorem{dfn}[thm]{Definition}
\theoremstyle{definition} \newtheorem{rmk}[thm]{Remark}
\newcommand{\qq}{\mathbb{Q}}
\newcommand{\zz}{\mathbb{Z}}
\newcommand{\ff}{\mathbb{F}}
\newcommand{\pp}{\mathbb{P}}
\newcommand{\Gal}{\mathrm{Gal}}
\newcommand{\Div}{\mathrm{Div}}
\newcommand{\Aut}{\mathrm{Aut}}
\newcommand{\GL}{\mathrm{GL}}
\newcommand{\Sp}{\mathrm{Sp}}
\newcommand{\GSp}{\mathrm{GSp}}
\newcommand{\Pic}{\mathrm{Pic}}
\newcommand{\Ri}{\mathrm{Ri}}
\begin{document}

\maketitle

\begin{abstract}

Let $k$ be a field of characteristic $0$, and let $\alpha_{1}$, $\alpha_{2}$, ... , $\alpha_{5}$ be algebraically independent and transcendental over $k$.  Let $K$ be the transcendental extension of $k$ obtained by adjoining the elementary symmetric functions of the $\alpha_{i}$'s.  Let $J$ be the Jacobian of the hyperelliptic curve defined over $K$ which is given by the equation $y^{2} = \prod_{i = 1}^{5} (x - \alpha_{i})$.  We define a tower of field extensions $K = K_{0}' \subset K_{1}' \subset K_{2}' \subset ...$ by giving recursive formulas for the generators of each $K_{n}'$ over $K_{n - 1}'$, and let $K_{\infty}' = \bigcup_{n = 0}^{\infty} K_{n}'$.  We show that $K_{\infty}'(\mu_{2})$ is the subextension of the field $K(J[2^{\infty}]) := \bigcup_{n = 0}^{\infty} K(E[2^{n}])$ corresponding to a central order-$2$ Galois subgroup of $\Gal(K(J[2^{\infty}]) / K(\mu_{2}))$, and a generator of $K(J[2^{\infty}])$ over $K_{\infty}'(\mu_{2})$ is given.

\end{abstract}

\section{Main definitions and results}

Let $k$ be a field of characteristic $0$.  Let $K$ be the transcendental extension of $k$ obtained by adjoining the coefficients of the quintic polynomial $\prod_{i = 1}^{5} (x - \alpha_{i})$, where the $\alpha_{i}$'s are independent and transcendental over $k$.  Fix an algebraic closure $\bar{K}$ of $K$.  Suppose that $C$ is a smooth projective model of an affine hyperelliptic curve over $K$ given by the Weierstrass equation
\begin{equation}\label{hyperelliptic model genus 2}
y^{2} = \prod_{i = 1}^{5} (x - \alpha_{i}).
\end{equation}
Write $J$ for the Jacobian of $C$, which is also defined over $K$.  For any integer $n \geq 0$; let $J[2^{n}]$ be the subgroup of $J(\bar{K})$ of $2^{n}$-torsion points; and let $K_{n}$ be the extension of $K$ over which they are defined.  (Note that $K_{0} = K$.)  Further, denote by $J[2^{\infty}]$ the subgroup of all $2$-power torsion points and denote by $K_{\infty}$ the minimal (algebraic) extension of $K$ over which they are defined.

Let $T$ be a $15$-regular rooted tree; let $|T|$ denote the set of vertices of $T$; and let $v_{0} \in |T|$ denote the root vertex.  Since $T$ is a tree, one may define the ``distance" between two vertices in $|T|$ to be the number of edges in a simple path connecting them.  For any integer $n \geq 0$, let $|T|_{n}$ (respectively $|T|_{\leq n}$, $T_{\geq n}$) denote the subset of vertices of $T$ which are of distance $n$ (respectively $\leq n$, $\geq n$) from the root $v_{0}$.  The fact that $T$ is a tree also implies that each vertex $v \in |T|_{n}$ for $n \geq 1$ has exactly one ``parent", that is, a unique vertex $\tilde{v} \in |T|_{n - 1}$ of distance $1$ from $v$.

Consider the set $\mathcal{R}$ consisting of all ordered triples $(R_{1}, R_{2}, R_{3})$, where the $R_{i}$'s are pairwise disjoint (unordered) $2$-element subsets of $\pp_{\bar{K}}^{1}$.  We say that two such triples $(R_{1}, R_{2}, R_{3}), (S_{1}, S_{2}, S_{3}) \in \mathcal{R}$ are \textit{permutation equivalent} if there is a permutation $\sigma$ on $\{1, 2, 3\}$ such that $S_{i} = R_{\sigma(i)}$ for $i = 1, 2, 3$.  Let $\bar{\mathcal{R}}$ be the set of permutation equivalence classes of such triples; we will write $[(R_{1}, R_{2}, R_{3})] \in \bar{\mathcal{R}}$ for the equivalence class of a triple $(R_{1}, R_{2}, R_{3})$.  If $R = [(R_{1}, R_{2}, R_{3})] \in \bar{\mathcal{R}}$, let $|R| := R_{1} \cup R_{2} \cup R_{3} \subset \pp_{\bar{K}}^{1}$; clearly this set has cardinality $6$ and is well-defined regardless of the choice of representative for $R$.

Define $M : \mathcal{R} \to M_{3}(\bar{K})$ to be the map which sends $(R_{1}, R_{2}, R_{3})$

\noindent$= (\{r_{1, 1}, r_{1, 2}\}, \{r_{2, 1}, r_{2, 2}\}, \{r_{3, 1}, r_{3, 2}\}) \in \mathcal{R}$ to the $3$-by-$3$ matrix $M(R)$ with entries in $\bar{K}$ defined as follows.  If $\infty \notin R_{i}$, then the $i$th row of $M(R)$ is $(r_{i, 1}r_{i, 2}, -(r_{i, 1} + r_{i, 2}), 1)$.  If $\infty \in R_{i}$ and we assume without loss of generality that $r_{i, 2} = \infty$, then the $i$th row of $M(R)$ is $(-r_{i, 1}, 1, 0)$.  

Let $U \subset M_{3}(\bar{K})$ be the subset of matrices $A = (A_{i, j})$ such that $A_{i, 3}x^{2} + A_{i, 2}x + A_{i, 1} \in \bar{K}[x]$ is a squarefree polynomial of degree $1$ or $2$ for each $i$, and such that $A_{i, 3} = 0$ for at most one $i \in \{1, 2, 3\}$.  Note that $M(\mathcal{R}) \subset U$.  Now define $N : U \to \mathcal{R}$ as follows.  Let $A = (A_{i, j})$ be a matrix in $U$.  Then if $A_{i, 3} \neq 0$, let $R_{i}$ be the set of roots of the (squarefree) quadratic polynomial $A_{i, 3}x^{2} + A_{i, 2}x + A_{i, 1} \in \bar{K}[x]$, and if $A_{i, 3} = 0$, let $R_{i} = \{-A_{i, 1} / A_{i, 2}, \infty\}$.  It is easy to check that $N \circ M$ is the identity function on $\mathcal{R}$.

For any matrix $A \in M_{3}(\bar{K})$, let 
$$A^{\vee} = \left( {\begin{array}{ccc}  0 & 0 & 1 \\ 0 & -2 & 0 \\ 1 & 0 & 0 \end{array} } \right) \mathrm{adj}(A),$$
where $\mathrm{adj}(A) := \det(A)A^{-1}$ denotes the adjugate of $A$.

If $(R_{1}, R_{2}, R_{3}) \in \mathcal{R}$ such that $M((R_{1}, R_{2}, R_{3}))^{\vee} \in M(\mathcal{R})$, then we assign $\Ri((R_{1}, R_{2}, R_{3})) = N(M((R_{1}, R_{2}, R_{3}))^{\vee})$.  It turns out (see Proposition \ref{prop: existence of decoration genus 2}(b) below) that $\Ri((R_{1}, R_{2}, R_{3}))$ is defined for any $(R_{1}, R_{2}, R_{3}) \in \mathcal{R}$.  Moreover, this map $\Ri$ clearly respects permutation equivalence, so it descends to a map $\Ri : \bar{\mathcal{R}} \to \bar{\mathcal{R}}$, which we call the \textit{Richelot operator} on $\bar{\mathcal{R}}$.

\begin{dfn}\label{def: decoration genus 2}

A \textit{decoration} on the 15-regular tree $T$ is a map $\Psi : |T|_{\geq 1} \to \bar{\mathcal{R}}$ with the following properties:

a) For any two distinct vertices $w, w' \in |T|_{\geq 1}$ with the same parent vertex, $\Psi(w) \neq \Psi(w')$.

b) For any vertex $w \in |T|_{1}$, $|\Psi(w)| = \{\alpha_{i}\}_{i = 1}^{5} \cup \{\infty\}$.

c) For any vertex $w \in |T|_{\geq 2}$, $|\Psi(w)| = |\Ri(\Psi(\tilde{w}))|$ but $\Psi(w) \neq \Ri(\Psi(\tilde{w}))$.

\end{dfn}

The following proposition shows that there exists a decoration on $T$.

\begin{prop}\label{prop: existence of decoration genus 2}

Let $\Psi$ be a decoration on $T$.  Then we have the following:

a) There are exactly $15$ permutation equivalence classes $R \in \bar{\mathcal{R}}$ with the property that $|R| = \{\alpha_{i}\}_{i = 1}^{5} \cup \{\infty\}$.

b) The Richelot operator $\Ri$ is defined for all permutation classes in $\bar{\mathcal{R}}$ (in particular, $\Ri(\Psi(w))$ is defined for any $w \in |T|_{\geq 1}$).

c) For any $w \in |T|_{\geq 1}$, up to permutation equivalence, there are exactly $14$ permutation equivalence classes $R \in \bar{\mathcal{R}}$ such that $|R| = |\Ri(\Psi(w))|$ but $R \neq \Ri(\Psi(w))$.

\end{prop}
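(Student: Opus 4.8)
The plan is to obtain (a) by a direct count, to reduce (b) to the non-vanishing of $\det M(R)$ for the triples $R$ that actually occur, and then to deduce (c) from (a) and (b). \emph{For (a):} since the $\alpha_i$ are algebraically independent over $k$ they are pairwise distinct and none is $\infty$, so $\{\alpha_i\}_{i=1}^{5}\cup\{\infty\}$ is a genuine $6$-element subset of $\pp^1_{\bar K}$; a class $R\in\bar{\mathcal R}$ with $|R|=\{\alpha_i\}_{i=1}^{5}\cup\{\infty\}$ is precisely an unordered partition of this set into three $2$-element blocks, distinct partitions giving distinct classes, and the number of such partitions is $\frac{6!}{(2!)^{3}\cdot 3!}=15$.

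\emph{For (b), the reduction.} Write the $i$th row of $M(R)$ as the coefficient vector of a polynomial $G_i\in\bar K[x]$ --- so $\deg G_i=2$ unless $\infty\in R_i$, in which case $\deg G_i=1$, and at most one $G_i$ is linear by disjointness. Since $\mathrm{adj}(M(R))=\det(M(R))\,M(R)^{-1}$, the matrix $M(R)^{\vee}$ is $\det(M(R))$ times an invertible matrix whenever $\det(M(R))\neq0$, and one verifies by a direct computation that in this case $N(M(R)^{\vee})$ is the triple of root sets of the classical Richelot quadratics $H_i:=G_j'G_k-G_jG_k'$ (with $\{i,j,k\}=\{1,2,3\}$). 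Routine arguments --- using $R_j\cap R_k=\emptyset$ and $\det M(R)\neq0$, or equivalently the classical fact that a Richelot partition with nonzero determinant produces a smooth genus-$2$ curve $y^2=H_1H_2H_3$ --- then show that each $H_i$ is squarefree of degree $1$ or $2$, that at most one $H_i$ has degree $1$, and that the root sets of $H_1,H_2,H_3$ are pairwise disjoint; hence $N(M(R)^{\vee})\in\mathcal R$ and $\Ri(R)$ is defined as soon as $\det M(R)\neq0$. It therefore remains to prove that $\det M(R)\neq0$ for every $R\in\bar{\mathcal R}$ with $|R|=|\Psi(w)|$ and $w\in|T|_{\ge1}$.

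\emph{For (b), the crux.} I would argue by induction on the distance $n\ge1$ of $w$ from $v_0$, proving simultaneously that $\det M(R)\neq0$ for every $R$ with $|R|=|\Psi(w)|$ and that the genus-$2$ curve $C_w\colon y^{2}=\prod_{r\in|\Psi(w)|,\ r\neq\infty}(x-r)$ has absolutely simple Jacobian (over $\bar K$), isogenous to $J$. For $n=1$, $|\Psi(w)|=\{\alpha_i\}_{i=1}^{5}\cup\{\infty\}$ and $C_w=C$, whose Jacobian $J$ is absolutely simple by the classical fact that the generic genus-$2$ Jacobian is absolutely simple (applicable because the $\alpha_i$ are algebraically independent over $k$); moreover for a partition $R$ whose $\infty$-block is $\{\alpha_m,\infty\}$, expanding the determinant along the row coming from that block gives $\det M(R)=\alpha_m(\sigma_1(S)-\sigma_1(S'))-(\sigma_2(S)-\sigma_2(S'))$, where $S,S'$ are the other two blocks and $\sigma_1,\sigma_2$ are sum and product, and this is a nonzero polynomial in the $\alpha_i$ because it contains an uncancelled monomial $\alpha_m\alpha_j$. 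For the inductive step, if $w\in|T|_{n+1}$ has parent $\tilde{w}$, then the inductive hypothesis (together with the reduction above) makes $\Ri$ defined at $\Psi(\tilde{w})$, and the computation of the previous paragraph identifies $C_w$, whose Weierstrass set is $|\Psi(w)|=|\Ri(\Psi(\tilde{w}))|$, with the Richelot image of $C_{\tilde{w}}$ along the partition $\Psi(\tilde{w})$; hence $\mathrm{Jac}(C_w)$ is isogenous to $\mathrm{Jac}(C_{\tilde{w}})$, hence to $J$, hence absolutely simple. Finally, for any $R$ with $|R|=|\Psi(w)|$, the Richelot quotient of $\mathrm{Jac}(C_w)$ attached to $R$ is isogenous to $\mathrm{Jac}(C_w)$, hence absolutely simple, hence not a product of elliptic curves; since $\det M(R)=0$ would force that quotient to be exactly such a product, we conclude $\det M(R)\neq0$. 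This closes the induction and, with the reduction, proves (b).

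\emph{For (c), and the main difficulty.} The count in (a) applies verbatim to any $6$-element subset of $\pp^1_{\bar K}$, and $|\Ri(\Psi(w))|$ is such a set because $\Ri(\Psi(w))\in\bar{\mathcal R}$ by (b); so there are exactly $15$ classes $R\in\bar{\mathcal R}$ with $|R|=|\Ri(\Psi(w))|$, exactly one of which is $\Ri(\Psi(w))$ itself, leaving exactly $14$ with $R\neq\Ri(\Psi(w))$. I expect the only genuinely substantive step to be the non-vanishing of $\det M(R)$ at vertices of distance $\ge2$ from $v_0$; the absolute-simplicity argument above looks like the cleanest route, but it leans on the classical dichotomy (vanishing of $\det M(R)$ $\iff$ degeneration of the associated Richelot quotient to a product of elliptic curves), and an alternative would be to follow the Weierstrass points through the recursive formulas and rule out algebraic coincidences by hand.
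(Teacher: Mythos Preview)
Parts (a) and (c) you handle exactly as the paper does: both are the elementary count $\tfrac{1}{3!}\binom{6}{2}\binom{4}{2}\binom{2}{2}=15$ of unordered partitions of a six-element set into three pairs, applied first to $\{\alpha_i\}\cup\{\infty\}$ and then to $|\Ri(\Psi(w))|$.

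For (b) the paper does something much shorter and more general than your proposal. It does not reduce to $\det M(R)\neq 0$ at all: since $A^{\vee}$ is defined via the adjugate rather than the inverse, $A^{\vee}$ exists for every $A$, and the paper simply invokes Lemma~8.4.2 of \cite{smith2005explicit}, which says that whenever $\prod_i (A_{i,3}x^2+A_{i,2}x+A_{i,1})$ is squarefree of degree $5$ or $6$, the same holds for $\prod_i ((A^{\vee})_{i,3}x^2+(A^{\vee})_{i,2}x+(A^{\vee})_{i,1})$; hence $A^{\vee}\in U$ and $\Ri(R)=N(A^{\vee})$ is defined for \emph{every} $R\in\bar{\mathcal R}$. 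No decoration, no induction on the tree, and no simplicity of any Jacobian enters.

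Your argument, by contrast, establishes only the parenthetical ``in particular'' clause of (b): you prove $\det M(R)\neq 0$ only for those $R$ with $|R|=|\Psi(w)|$ for some vertex $w$, which is a strictly smaller family than all of $\bar{\mathcal R}$. So relative to the literal statement of (b) there is a gap --- you never address a general $R\in\bar{\mathcal R}$, and your absolute-simplicity mechanism cannot reach such $R$ in any case (for instance $R=[(\{1,-1\},\{2,-2\},\{3,-3\})]\in\bar{\mathcal R}$ has $\det M(R)=0$). On the other hand, the parenthetical clause is all that the rest of the paper, including the Remark on existence of decorations, actually needs; for that restricted statement your route through absolute simplicity of $J$ and its iterated Richelot-isogenous surfaces is a correct, if considerably heavier, alternative to the paper's one-line citation.
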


\begin{rmk}

Proposition \ref{prop: existence of decoration genus 2} implies that a decoration exists via the following argument.  For each $N \geq 1$, define $F_{N}$ to be the set of all functions $\Psi : |T|_{\leq N} \setminus \{v_{0}\} \to \bar{K}$ which satisfy Definition \ref{def: decoration genus 2} for $v \in |T|_{\leq N} \setminus \{v_{0}\}$.  Clearly, each $F_{N}$ is finite, and for each $N < N'$, there is a map $F_{N'} \to F_{N}$ by restriction, so it will suffice to show that each $F_{N}$ is nonempty.  Part (a) implies that $F_{1}$ is nonempty, and parts (b) and (c) show that if $F_{N}$ is nonempty, then so is $F_{N + 1}$; thus, $F_{N}$ is nonempty for all $N$ as desired.

\end{rmk}

\begin{proof}

Note that there are exactly $\frac{1}{3!} {6 \choose 2} {4 \choose 2} {2 \choose 2} = 15$ partitions of $6$ objects into $3$ pairs.  It immediately follows that if $S$ is any element of $\bar{\mathcal{R}}$, there are $15$ partitions of the $6$ elements of $|S|$ into $3$ pairs, and thus, there are $15$ permutation equivalence classes $R$ such that $|R| = |S|$.  Part (a) follows from the fact that $\{\alpha_{i}\}_{i = 1}^{5} \cup \{\infty\}$ is a set of $6$ objects.  Let $(R_{1}, R_{2}, R_{3})$ be a representative of a permutation equivalence class in $R \in \bar{\mathcal{R}}$, and let $A = M((R_{1}, R_{2}, R_{3}))$.  Then the polynomial $\prod_{i = 1}^{3} (A_{i, 3}x^{2} + A_{i, 2}x + A_{i, 1}) \in \bar{K}[x]$ is squarefree and has degree $5$ or $6$.  By Lemma 8.4.2 of \cite{smith2005explicit}, $\prod_{i = 1}^{3} ((A^{\vee})_{i, 3}x^{2} + (A^{\vee})_{i, 2}x + (A^{\vee})_{i, 1}) \in \bar{K}[x]$ is also squarefree of degee $5$ or $6$.  Then $A^{\vee} \in U$, so $\Ri(R) = N(A^{\vee})$ is defined, thus proving part (b).  In particular, for any $w \in |T|_{\geq 1}$, $\Ri(\Psi(w))$ is defined and $|\Ri(\Psi(w))| \subset \pp_{\bar{K}}^{1}$ has cardinality $6$.  Part (c) then follows from the same combinatorial argument as was used to prove (a).

\end{proof}

If $R = [(R_{1}, R_{2}, R_{3})] \in \bar{\mathcal{R}}$, let $K(R)$ be the extension of $K$ obtained by adjoining all entries of $M((R_{1}, R_{2}, R_{3}))$.  Note that if two elements of $\mathcal{R}$ are permutation equivalent, their images under $M$ are equivalent up to permutation of the rows.  Thus, $K(R)$ depends only on $R \in \bar{\mathcal{R}}$ and not on the choice of representative of the permutation equivalence class (it is in fact the extension of $K$ fixed by all Galois automorphisms in $\Gal(\bar{K} / K)$ which fix $R$ under the obvious Galois action).

Fix a compatible system $\{\zeta_{2^{n}}\}_{n \geq 0}$ of $2^{n}$-th roots of unity.  For any extension field $L$ of $K$, let $L(\mu_{2}) = \bigcup_{n = 1}^{\infty} L(\zeta_{2^{n}})$.

We write $\rho_{2} : \Gal(\bar{K} / K) \to \Aut_{\zz_{2}}(T_{2}(J))$ for the continuous homomorphism induced by the natural Galois action on $T_{2}(E)$, and denote its image by $G$.  Similarly, for any integer $n \geq 0$, we write $\bar{\rho}_{2}^{(n)} : \Gal(K_{n} / K) \to \GL(J[2^{n}])$ for the homomorphism induced by the natural Galois action on $E[2^{n}]$, and denote its image by $\bar{G}^{(n)}$.  Let $G(n)$ denote the kernel of the natural surjection $G \twoheadrightarrow \bar{G}^{(n)}$; it is the image under $\rho_{2}$ of the normal subgroup $\Gal(\bar{K} / K_{n}) \lhd \Gal(\bar{K} / K)$.  Note that $G(0) = G$.

It follows from the Galois equivariance of the Weil pairing that $G$ is contained in the corresponding group of sympectic similitudes
$$\mathrm{GSp}(T_{2}(J)) := \{\sigma \in \mathrm{Aut}(T_{2}(J))\ |\ e_{2}(P^{\sigma}, Q^{\sigma}) = \chi_{2}(\sigma)e_{2}(P, Q)\ \forall P, Q \in T_{2}(J)\},$$
where $\displaystyle e_{2} : T_{2}(J) \times T_{2}(J) \to \lim_{\leftarrow n} \mu_{2^{n}} \cong \zz_{2}$ is the Weil pairing on the $2$-adic Tate module of $J$ (with respect to the canonical principal polarization of the Jacobian $J$), and $\chi_{2} : G_{K} \to \zz_{2}^{\times}$ is the cyclotomic character on the absolute Galois group of $K$.  Note that, due to the Galois equivariance of the Weil pairing, the image under $\rho_{2}$ of $\Gal(K_{\infty} / K(\mu_{2}))$ coincides with $G \cap \Sp(T_{2}(J))$, where 
$$\Sp(T_{2}(J)) := \{\sigma \in \mathrm{Aut}(T_{2}(J))\ |\ e_{2}(P^{\sigma}, Q^{\sigma}) = e_{2}(P, Q)\ \forall P, Q \in T_{2}(J)\}$$
is the corresponding symplectic group.  The main theorem of \cite{yelton2014images} implies that $G$ contains the principal congruence subgroup $\Gamma(2) \lhd \Sp(T_{2}(J))$ consisting of symplectic automorphisms which are congruent to the identity modulo $2$.  We write $-1 \in \Gamma(2) \subset G$ for the scalar automorphism which acts on $T_{2}(J)$ as multiplication by $-1$.

Now we may state the main result.

\begin{thm}\label{prop: main theorem genus 2}

With notation as above, let $\Psi$ be a decoration on $T$.  Set 

$$K_{\infty}' := K(\{\Psi(v)\}_{v \in |T| \backslash \{v_{0}\}}).$$

a) Choose $i, j \in \{1, 2, 3, 4, 5\}$ with $i \neq j$, and choose an element $\sqrt{\alpha_{i} - \alpha_{j}} \in \bar{K}$ whose square is $\alpha_{i} - \alpha_{j}$.  Then we have 

$$K_{\infty} = K_{\infty}'(\sqrt{\alpha_{i} - \alpha_{j}})(\mu_{2}).$$

b) The Galois automorphism whose image under $\rho_{2}$ is the scalar matrix $-1 \in \Gamma(2)$ acts on $K_{\infty}$ by fixing $K_{\infty}'(\mu_{2})$ and sending $\sqrt{\alpha_{i} - \alpha_{j}}$ to $-\sqrt{\alpha_{i} - \alpha_{j}}$ for $1 \leq i, j \leq 6$, $i \neq j$.

\end{thm}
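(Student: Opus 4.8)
The plan is to carry out everything with reference to the Galois automorphism $\tau \in \Gal(\bar K/K)$ satisfying $\rho_{2}(\tau) = -1$; it exists because $-1 \in \Gamma(2) \subseteq G$, and its restriction to $K_{\infty}$ is a central element of $\Gal(K_{\infty}/K) \cong G$ of order $2$ (since $-1 \neq 1$). The geometric input is the behaviour of a Richelot isogeny $\phi \colon J \to J'$, which is attached to one of the fifteen partitions of the six Weierstrass points of $C$ and is defined over the field of definition of that partition: one has $\hat\phi \circ \phi = [2]$, so $J[2^{n+1}] = \phi^{-1}(\hat\phi^{-1}(J[2^{n}]))$, together with the identity $J'[2] = \phi\bigl([2]^{-1}(\ker\phi)\bigr)$ where $[2]^{-1}(\ker\phi) \subseteq J[4]$. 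I would establish part (a) first and then deduce part (b).

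For the inclusion $K_{\infty}'(\sqrt{\alpha_{i} - \alpha_{j}})(\mu_{2}) \subseteq K_{\infty}$: we have $K(\mu_{2}) \subseteq K_{\infty}$ by surjectivity and Galois-equivariance of the Weil pairings $e_{2^{n}} \colon J[2^{n}] \times J[2^{n}] \to \mu_{2^{n}}$; we have $\sqrt{\alpha_{i} - \alpha_{j}} \in K_{\infty}$, which likewise follows from the explicit description of the $2$-power torsion (and is classical); and $K_{\infty}' \subseteq K_{\infty}$. For the last point I would first identify $K_{\infty}'$ with the compositum, over all Richelot chains $\phi^{(n)} \colon J \to J^{(n)}$ of every length, of the fields $K(J^{(n)}[2])$: for any vertex $w$, the values $\Psi(u)$ at its children together with $\Ri(\Psi(w))$ run over all fifteen partitions of the six branch points of $J^{(n)}$ and hence, since $K$ has characteristic $0$, generate those branch points individually, whereas $K(\Ri(\Psi(w))) \subseteq K(\Psi(w))$ because $\Ri$ is given by polynomial operations on the matrix entries; conversely $K(\Psi(v)) \subseteq K(J^{(n)}[2])$ whenever $v$ is a child of a level-$n$ vertex. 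An induction on the chain length then gives $K(J^{(n)}[2]) \subseteq K_{\infty}$: indeed $K(J[2]) = K_{1} \subseteq K_{\infty}$, and $J^{(n)}[2] = \phi^{(n)}\bigl([2]^{-1}(\ker\phi^{(n)})\bigr)$ with $[2]^{-1}(\ker\phi^{(n)}) \subseteq J[2^{n+1}]$, while $\phi^{(n)}$ is defined over a field generated by $2$-torsion of strictly shorter chains, which lies in $K_{\infty}$ by the inductive hypothesis.

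The reverse inclusion $K_{\infty} \subseteq K_{\infty}'(\sqrt{\alpha_{i} - \alpha_{j}})(\mu_{2}) =: F$ is the heart of the matter. I would prove $J[2^{n+1}] \subseteq F$ by induction on $n$: fixing one level-one Richelot isogeny $\phi_{v} \colon J \to J^{(1)}$ (defined over $K(\Psi(v)) \subseteq K_{\infty}'$), factor $[2] = \hat\phi_{v} \circ \phi_{v}$, so that $J[2^{n+1}] = \phi_{v}^{-1}\bigl(\hat\phi_{v}^{-1}(J[2^{n}])\bigr)$ and one passes from $J[2^{n}]$ (inside $F$ by the inductive hypothesis) first to $\hat\phi_{v}^{-1}(J[2^{n}]) \subseteq J^{(1)}[2^{n+1}]$ and then to $J[2^{n+1}]$, each time adjoining the at-most-$(\mathbb{Z}/2)^{2}$ fibres of a Richelot isogeny. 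By the explicit formulas of \cite{smith2005explicit} these fibres are cut out by square roots of differences of branch points of the curves occurring in the tree, and the point requiring genuine work — which I expect to be the main obstacle — is that all such square roots already lie in $F$: after the single adjunction of $\sqrt{\alpha_{i} - \alpha_{j}}$, what remains are square roots of pairwise products of differences of branch points of the various $J^{(m)}$, and these lie in $K_{\infty}'$ by Thomae-type identities. Equivalently, one must show that $K_{\infty}'(\mu_{2})$ coincides with the field generated over $K(\mu_{2})$ by the image of $J[2^{\infty}]$ in the Kummer surface $J/\{\pm 1\}$. Granting this, part (a) follows.

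Part (b) is then essentially formal. The same chain-length induction as above, tracking $\tau$ instead of containments, shows $\tau$ fixes $K_{\infty}'$: $-1$ acts trivially on $J[2]$, so $\tau$ fixes $K_{1} = K(J[2]) \supseteq \{\Psi(v) : v \in |T|_{1}\}$; and if $\tau$ fixes the field of definition of $\phi^{(n)}$ (the inductive hypothesis, that field being generated by $2$-torsion of shorter chains), then $\phi^{(n)}$ is $\tau$-equivariant, and since $\tau$ acts on $[2]^{-1}(\ker\phi^{(n)}) \subseteq J[2^{n+1}]$ as $-1$ while $-1$ is trivial on $2$-torsion, $\tau$ acts trivially on $J^{(n)}[2] = \phi^{(n)}\bigl([2]^{-1}(\ker\phi^{(n)})\bigr)$ and so fixes $K(J^{(n)}[2])$. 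As moreover $\chi_{2}(\tau) = 1$ ($-1$ being symplectic), $\tau$ fixes $K(\mu_{2})$, hence all of $K_{\infty}'(\mu_{2})$. By part (a) this gives $[K_{\infty} : K_{\infty}'(\mu_{2})] \leq 2$; and if $\alpha_{i} - \alpha_{j}$ were a square in $K_{\infty}'(\mu_{2})$ then part (a) would force $K_{\infty} = K_{\infty}'(\mu_{2})$ and hence $\tau|_{K_{\infty}} = \mathrm{id}$, contradicting $-1 \neq 1$ in $G$. Therefore $[K_{\infty} : K_{\infty}'(\mu_{2})] = 2$, $\Gal(K_{\infty}/K_{\infty}'(\mu_{2})) = \langle \tau|_{K_{\infty}} \rangle$, and $\tau(\sqrt{\alpha_{i} - \alpha_{j}}) = -\sqrt{\alpha_{i} - \alpha_{j}}$. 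For an arbitrary pair of branch points the corresponding square root is carried by $\tau$ to $\pm$ itself, and to $-$ itself exactly when that difference is a non-square in $K_{\infty}'(\mu_{2}) = (K_{\infty})^{\langle\tau\rangle}$, equivalently when $-1 \in \GL(J[4])$ acts non-trivially on the associated quadratic subextension of $K(J[4])$; this holds for every pair by the explicit description of the $4$-torsion (or Thomae's formula), giving the asserted action of $\tau$ on all the $\sqrt{\alpha_{i} - \alpha_{j}}$.
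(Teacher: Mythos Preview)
Your forward inclusion $K_\infty'(\sqrt{\alpha_i-\alpha_j})(\mu_2) \subseteq K_\infty$ and your deduction of (b) from (a) are correct. The genuine gap is the reverse inclusion in (a): you correctly flag it as the main obstacle, but ``Thomae-type identities'' is not a proof. The claim that the fibres of a Richelot isogeny over arbitrary points of $J'[2^n]$ are cut out by square roots of differences of branch points, and that suitable products of these already lie in $K_\infty'$, would need substantial justification; it is not clear this can be made to work inductively, since each step introduces a new curve $C_w$ with its own branch points, whose pairwise differences you would again have to control modulo squares in an ever-growing field.

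The paper sidesteps this with a Galois-theoretic characterization rather than a direct computation of fibres. Having established $K_n' \subseteq K_n$, it shows that the image of $\Gal(K_n/K_n')$ under $\bar\rho_2^{(n)}$ is exactly the subgroup of scalars in $\bar G^{(n)}$: an element of $\GSp(J[2^n])$ fixing every $R_w$ stabilizes every maximal Weil isotropic subgroup of $J[2^n]$, hence stabilizes $\langle P\rangle$ for each $P$ of order $2^n$ (this being the intersection of all maximal isotropics containing $P$), and is therefore a scalar. Over $K(\mu_2)$ the only symplectic scalars are $\pm 1$, so $[K_\infty : K_\infty'(\mu_2)] \le 2$; one explicit computation at level $2$ then gives $K_2' = K_1(\{\sqrt{a_{i,j}a_{l,m}}\}_{\{i,j\}\neq\{l,m\}})$ and hence $\sqrt{a_{i,j}}\in K_2\setminus K_2'$. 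Notice that your own argument for (b)---that the scalar $-1$ fixes every $K(J^{(n)}[2])$---is exactly one direction of this characterization; the paper's key step is the converse, and that converse is what replaces your unproved inductive computation.
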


An outline of this paper is as follows.  In \S2, we define a well-known isogeny of Jacobian surfaces, and in \S3, we give a natural interpretation of the tree $T$ in terms of $\zz_{2}$-lattices contained in $T_{2}(J) \otimes \qq_{2}$.  Finally, \S4 is dedicated to a proof of Theorem \ref{prop: main theorem genus 2} using the machinery established in \S2, 3.

\section{The Richelot isogeny}

We define the Richelot isogeny following \cite{smith2005explicit} (see also \cite{flynn1996prolegomena}).  Let $K$ be as in \S1. The statements of all results in this section assume that the ground field is $K$, although all results remain true for any algebraic extension of $K$.  If $f \in K[x]$ is a squarefree polynomial of degree $5$ or $6$ and $f = G_{1}G_{2}G_{3}$ with each $G_{i} \in K[x]$ a linear or quadratic polynomial, we will refer to the triple $(G_{1}, G_{2}, G_{3})$ as a \textit{quadratic splitting} (of $f = G_{1}G_{2}G_{3}$).  (Note that if $(G_{1}, G_{2}, G_{3})$ is a quadratic splitting, at most one of the $G_{i}$'s is linear.)  For any quadratic splitting $(G_{1}, G_{2}, G_{3})$, where each $G_{i} = G_{i 3}x^{2} + G_{i 2}x + G_{i 1}$ with each $G_{i j} \in K$, let $G$ be the matrix in $M_{3}(\bar{K})$ whose $(i, j)$th entry is $G_{i j}$.

Now for any quadratic splitting $(G_{1}, G_{2}, G_{3})$, for $i = 1, 2, 3$, if $G_{i}$ is quadratic, let $R_{G_{i}}$ be the set of roots of $G_{i}$, and if $G_{i}$ is linear, let $R_{G_{i}} = \{\alpha, \infty\}$ where $\alpha$ is the zero of $G_{i}$.  Then $(R_{G_{1}}, R_{G_{2}}, R_{G_{3}})$ is clearly an element of $\mathcal{R}$.

For two polynomials $G_{1}, G_{2} \in K[x]$, we denote $[G_{1}, G_{2}] = G_{1}G_{2}' - G_{2}G_{1}'$, where the ' symbol indicates the derivative.  If $(G_{1}, G_{2}, G_{3})$ is a quadratic splitting, write $\Ri((G_{1}, G_{2}, G_{3})) = (H_{1}, H_{2}, H_{3})$, where $H_{i} = [G_{i + 1}, G_{i + 2}]$ for $i = 1, 2, 3$ (here we are treating $i$ as an element of $\zz / 3\zz$), and we say that $\Ri((G_{1}, G_{2}, G_{3}))$ is the \textit{Richelot isogenous} quadratic splitting.  The next proposition shows that applying $\Ri$ to a quadratic splitting is compatible to applying $\Ri$ to the corresponding element of $\mathcal{R}$.

\begin{prop}\label{prop: Richelot properties}

Let $(G_{1}, G_{2}, G_{3})$ be a quadratic splitting; let $(H_{1}, H_{2}, H_{3}) = \Ri(G_{1}, G_{2}, G_{3})$; and assume the above notation.  Then we have $(R_{H_{1}}, R_{H_{2}}, R_{H_{3}}) = \Ri((R_{G_{1}}, R_{G_{2}}, R_{G_{3}})) \in \mathcal{R}$.  In particular, $(R_{H_{1}}, R_{H_{2}}, R_{H_{3}})$ is an element of $\mathcal{R}$, or equivalently, $H_{1}H_{2}H_{3}$ is squarefree of degree $5$ or $6$.

\end{prop}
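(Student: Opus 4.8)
The plan is to make explicit the relationship between the matrix $G$ associated to a quadratic splitting $(G_1, G_2, G_3)$ and the matrix $G^\vee$ appearing in the definition of the Richelot operator on $\mathcal{R}$, and then to show that the rows of $G^\vee$ encode exactly the polynomials $H_i = [G_{i+1}, G_{i+2}]$. Concretely, the $i$th row of $G$ has entries $(G_{i1}, G_{i2}, G_{i3})$, which are the coefficients (constant, linear, quadratic) of $G_i$, so that $G_i(x) = (G_{i1}, G_{i2}, G_{i3}) \cdot (1, x, x^2)^{T}$. First I would observe that $G \in U$: this is essentially the statement that $(G_1, G_2, G_3)$ being a quadratic splitting of a squarefree polynomial of degree $5$ or $6$ forces each $G_i$ to be squarefree of degree $1$ or $2$, with at most one $G_i$ linear, which are precisely the defining conditions on $U$. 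Moreover $N(G) = (R_{G_1}, R_{G_2}, R_{G_3})$ by construction, since $N$ reads off the roots of each row's quadratic (or the root-plus-$\infty$ in the linear case), matching the definition of $R_{G_i}$.

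Next I would carry out the core computation: expanding $H_i = [G_{i+1}, G_{i+2}] = G_{i+1} G_{i+2}' - G_{i+2} G_{i+1}'$ in terms of the coefficients $G_{jk}$, one checks that the coefficient vector $(H_{i1}, H_{i2}, H_{i3})$ of $H_i$ equals (up to the fixed scalar/permutation encoded by the antidiagonal-type matrix $\left(\begin{smallmatrix} 0 & 0 & 1 \\ 0 & -2 & 0 \\ 1 & 0 & 0 \end{smallmatrix}\right)$) the $i$th row of $\operatorname{adj}(G)$, i.e. the appropriate $2\times 2$ minors of $G$ formed from rows $i+1$ and $i+2$. This is the bracket identity underlying the Richelot construction, and it is exactly the content of Lemma 8.4.2 (and the surrounding discussion) of \cite{smith2005explicit}; I would cite that lemma for the algebra rather than reproducing the full coefficient bookkeeping. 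The upshot is that the matrix whose rows are the coefficient vectors of $(H_1, H_2, H_3)$ is precisely $G^\vee$.

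With that identity in hand, the proposition follows by unwinding definitions. Since $M \circ N$ restricted to $M(\mathcal{R})$ is the identity and $N \circ M = \mathrm{id}_{\mathcal{R}}$ (both noted in the excerpt), and since the $i$th row of $G^\vee$ has coefficient vector giving root set $R_{H_i}$ by exactly the recipe defining $N$, we get $N(G^\vee) = (R_{H_1}, R_{H_2}, R_{H_3})$. On the other hand, $G = M((R_{G_1}, R_{G_2}, R_{G_3}))$ — checking that $M$ applied to the root sets recovers the monic-normalized coefficient vectors, which agree with the rows of $G$ up to the scaling that $\vee$ and $N$ are insensitive to — and hence $\Ri((R_{G_1}, R_{G_2}, R_{G_3})) = N(M((R_{G_1}, R_{G_2}, R_{G_3}))^\vee) = N(G^\vee) = (R_{H_1}, R_{H_2}, R_{H_3})$, which is the desired equality in $\mathcal{R}$. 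By Proposition \ref{prop: existence of decoration genus 2}(b) (or directly by the cited Lemma 8.4.2), $G^\vee \in U$, so $(R_{H_1}, R_{H_2}, R_{H_3})$ genuinely lies in $\mathcal{R}$, which is equivalent to $H_1 H_2 H_3$ being squarefree of degree $5$ or $6$.

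The main obstacle I anticipate is bookkeeping the normalization: the rows of $G$ need not be monic-normalized the way $M$ produces them, so I must be careful that $G$ and $M((R_{G_1}, R_{G_2}, R_{G_3}))$ differ only by left multiplication by an invertible diagonal matrix, and then verify that conjugating the relation $(\cdot)^\vee$ through such a diagonal scaling changes $G^\vee$ only by a scaling that is invisible to $N$ (which only sees roots, i.e. ratios of entries). Tracking $\det$ and $\operatorname{adj}$ under this diagonal rescaling, together with the degenerate (linear) case where one row has a zero in its last coordinate and $M$ uses the two-entry convention, is where the argument requires genuine care; everything else is a direct appeal to the definitions and to \cite{smith2005explicit}.
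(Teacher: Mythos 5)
Your proposal follows the same route as the paper's proof: reduce to the coefficient identity relating the $H_i$'s to $G^{\vee}$, cite Lemma~8.4.2 of \cite{smith2005explicit} (via Proposition~\ref{prop: existence of decoration genus 2}(b)) for squarefreeness, and unwind $M$, $N$, $\Ri$. So in spirit this is the intended argument, and it is essentially correct. There is, however, one inaccuracy worth flagging, and it is intertwined with the normalization worry you raise at the end.

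Expanding $H_i = [G_{i+1}, G_{i+2}]$ in the coefficients $G_{jk}$ gives
\[
H_i = (G_{i+1,2}G_{i+2,3} - G_{i+1,3}G_{i+2,2})\,x^2 + 2(G_{i+1,1}G_{i+2,3} - G_{i+1,3}G_{i+2,1})\,x + (G_{i+1,1}G_{i+2,2} - G_{i+1,2}G_{i+2,1}),
\]
so the coefficient vector of $H_i$ is built from the $2\times 2$ minors of $G$ taken from rows $i+1,i+2$. This is the $i$th \emph{row of the cofactor matrix} of $G$, hence the $i$th \emph{column} of $\mathrm{adj}(G)$ --- not its $i$th row, as you write mid-paragraph (your parenthetical ``minors formed from rows $i+1$ and $i+2$'' is the correct characterization; ``the $i$th row of $\mathrm{adj}(G)$'' is not). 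Consequently the matrix whose rows are the coefficient vectors of $(H_1,H_2,H_3)$ is $(G^{\vee})^{T} = \mathrm{adj}(G)^{T}\!\left(\begin{smallmatrix} 0 & 0 & 1 \\ 0 & -2 & 0 \\ 1 & 0 & 0 \end{smallmatrix}\right)$, not $G^{\vee}$ itself. (This transpose is also glossed over in the paper's one-line proof; a small numerical example, e.g.\ $G_1=(x-1)(x-2)$, $G_2=(x-3)(x-4)$, $G_3=(x-5)(x-6)$, confirms that the rows of $G^{\vee}$ have root sets different from $R_{H_1},R_{H_2},R_{H_3}$, while the columns agree.)

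The reason this matters is exactly the normalization point you anticipate but do not resolve. Write $G = D\, M(R_G)$ with $D = \mathrm{diag}(d_1,d_2,d_3)$ the leading coefficients. Then $\mathrm{adj}(G) = \det(D)\,\mathrm{adj}(M(R_G))\,D^{-1}$, so $G^{\vee} = \det(D)\,M(R_G)^{\vee}\,D^{-1}$ differs from $M(R_G)^{\vee}$ by a \emph{column} scaling, which $N$ is emphatically \emph{not} insensitive to (a column scaling changes the roots of each row's quadratic). So with the identity written as $H = G^{\vee}$, your plan to ``verify that the rescaling is invisible to $N$'' would fail. Once the transpose is corrected, everything falls into place: $(G^{\vee})^{T} = \det(D)\,D^{-1}\,(M(R_G)^{\vee})^{T}$ differs from $(M(R_G)^{\vee})^{T}$ by a \emph{row} scaling, which $N$ does ignore, so
\[
(R_{H_1},R_{H_2},R_{H_3}) = N\!\big((G^{\vee})^{T}\big) = N\!\big((M(R_G)^{\vee})^{T}\big),
\]
and the remainder of your argument, together with Lemma~8.4.2 of \cite{smith2005explicit} for squarefreeness, goes through. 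In short: your approach is the right one and matches the paper's; you just need to carry the transpose through (and, in doing so, your normalization concern dissolves rather than requiring a separate lemma).
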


\begin{proof}

Part (a) is straightforward to check through computation, in particular, by proving the identity $H = G^{\vee}$.  The fact that $H_{1}H_{2}H_{3}$ is squarefree of degree $5$ or $6$ follows from part (a) and Proposition \ref{prop: existence of decoration genus 2}(b) and is the statement of Lemma 8.4.2 in \cite{smith2005explicit} in any case.  

\end{proof}

Let $f(x) \in K[x]$ be a polynomial of degree $5$ or $6$ without multiple roots.  Let $C$ be a smooth projective model of the affine hyperelliptic curve over $K$ given by the equation $y^{2} = f(x)$, and let $J$ be its Jacobian.  Let $B \subset \pp_{\bar{K}}^{1}$ be the subset consisting of the $x$-coordinates of the branch points of $C$, where the $x$-coordinate of a point at infinity is $\infty \in \pp_{\bar{K}}^{1}$.  Thus, if $f$ has degree $d$ and has roots $\{\alpha_{i}\}_{i = 1}^{d}$, then $B = \{\alpha_{i}\}_{i = 1}^{d} \cup \{\infty\}$ if $d = 5$ and $B = \{\alpha_{i}\}_{i = 1}^{d}$ if $d = 6$.

Since $J$ is the Jacobian of a curve, it is endowed with a canonical principal polarization.  As in \S1, let $\displaystyle e_{2} : T_{2}(J) \times T_{2}(J) \to \lim_{\leftarrow n} \mu_{2^{n}}$ denote the Weil pairing on $T_{2}(J)$ determined by this polarization.  For each $n \geq 0$, let $\bar{e}_{2}^{(n)} : J[2^{n}] \times J[2^{n}] \to \mu_{2^{n}}$ denote the corresponding Weil pairing on $J[2^{n}]$.  A subgroup $N < J[2^{n}]$ is said to be \textit{Weil isotropic} if it is orthogonal with respect to the Weil pairing $\bar{e}_{2}^{(n)}$; that is, $N$ is Weil isotropic if $\bar{e}_{2}^{(n)}(P, Q) = 1$ for any $P, Q \in N$.  Such a subgroup $N < J[2^{n}]$ is said to be \textit{maximal Weil isotropic} if it is not properly contained in any other Weil isotropic subgroup of $J[2^{n}]$.

We define a correspondence between the maximal Weil isotropic subgroups of $J[2]$ and the permutation equivalence classes $R \in \bar{\mathcal{R}}$  with $|R| = B$ as follows.  It is known (III(a) \S2 Corollary 2.11 of \cite{mumford1984tata}) that each element of $J[2]$ is represented by a divisor of the form $e_{U} := \sum_{\alpha \in U} (\alpha, 0) - \#U \cdot (\infty)$, where $U \subseteq B$ has even cardinality, and conversely, any such divisor $e_{U}$ represents an element of $J[2]$.  Moreover, for two such subsets $U, U' \subseteq B$, $e_{U}$ and $e_{U'}$ are equivalent in $\Pic^{0}(C)$ if and only if $U = U'$ or $U = B \setminus U'$.  Thus, any element of $J[2]$ is represented uniquely by a divisor of the form $e_{U}$, where $U \subset B$ has cardinality $0$ or $2$.  By slight abuse of notation, if $U \subset B$ has cardinality $0$ or $2$, we will consider $e_{U}$ to be an element of $J[2]$.  Note that with this notation, the trivial element of $J[2]$ is $e_{\varnothing}$.  It is also shown in \cite{mumford1984tata}, III(a) \S6 that two elements $e_{U}, e_{U'} \in J[2]$ are Weil isotropic if and only if $U \cap U' = \varnothing$.  Thus, if $N < J[2]$ is a maximal Weil isotropic subgroup, then $N = \langle e_{U}, e_{U'} \rangle$, for some $U, U' \subset B$ each of cardinality $2$ and $U \cap U' = \varnothing$.  Then $N = \{e_{\varnothing}, e_{U}, e_{U'}, e_{U''}\}$, where $U'' = B \setminus (U \cup U')$.  Thus, $[(U, U', U'')] \in \bar{\mathcal{R}}$ with $|(U, U', U'')| = U \cup U' \cup U'' = B$, and the equivalence class $[(U, U', U'')]$ is uniquely determined by $N$.  Conversely, any equivalence class $[(R_{1}, R_{2}, R_{3})] \in \bar{\mathcal{R}}$ with $|[(R_{1}, R_{2}, R_{3})]| = R_{1} \cup R_{2} \cup R_{3} = B$ determines a maximal Weil isotropic subgroup of $J[2]$ given by $\{e_{\varnothing}, e_{R_{1}}, e_{R_{2}}, e_{R_{3}}\}$.  This defines the correspondence.

The following theorem, which states the existence of Richelot isogenies, is proven in \cite{smith2005explicit}, \S8.4.

\begin{thm}\label{prop: Richelot isogeny}

Assuming all of the above notations, let $(G_{1}, G_{2}, G_{3})$ be a quadratic splitting of the polynomial $f(x) \in K[x]$, so that $R_{G} := [(R_{G_{1}}, R_{G_{2}}, R_{G_{3}})] \in \bar{\mathcal{R}}$ with $|R_{G}| = B$.  Let $N$ be the maximal Weil isotropic subgroup of $J[2]$ corresponding to $R_{G}$.  Assume that $\det(G) \neq 0$.  Let $C$ be a smooth projective model of the affine hyperelliptic curve defined over $K$ given by 
\begin{equation}\label{hyperelliptic model genus 2'}
y^{2} = Df(x),
\end{equation}
where $D \in K$, and let $J$ be its Jacobian.  Let $C'$ be a smooth projective model of the affine hyperelliptic curve, defined over $K(R_{G})$, given by 
\begin{equation}\label{Richelot isogenous curve}
y^{2} = \det(G)^{-1}DH_{1}H_{2}H_{3},
\end{equation}
where $(H_{1}, H_{2}, H_{3}) = \Ri(G_{1}, G_{2}, G_{3})$, and let $J'$ be its Jacobian.  Then 

a) there is an isogeny $\psi : J \to J'$ (defined over $K(R_{G})$) whose kernel is $N$; and 

b) the image of $J[2]$ under $\psi$ is the maximal Weil isotropic subgroup of $J'[2]$ corresponding to $\Ri(R_{G})$.

\end{thm}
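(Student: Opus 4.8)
The plan is to realize $\psi$ as the homomorphism induced by the classical Richelot correspondence, to compute its kernel from that description, and then to obtain part (b) by identifying the dual isogeny $\hat\psi$ with the Richelot isogeny attached to the splitting $(H_{1}, H_{2}, H_{3})$. First I would construct the correspondence over $K(R_{G})$. By definition $K(R_{G})$ contains the entries of $G$, hence the coefficients of the polynomials $H_{i} = [G_{i+1}, G_{i+2}]$ and the scalar $\det(G)^{-1}$, so all the data defining $C'$ and the correspondence are rational over $K(R_{G})$; moreover twisting both $C$ and $C'$ by $D$ is an isomorphism over $\bar{K}$ compatible with everything, so one may assume $D = 1$. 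The construction produces a geometrically irreducible curve $\mathcal{Z}$ together with two finite degree-$2$ morphisms $\pi : \mathcal{Z} \to C$ and $\pi' : \mathcal{Z} \to C'$, cut out by explicit equations in the coordinates of $C$ and $C'$ whose coefficients are polynomial in the entries of $G$ (this uses the identity $H = G^{\vee}$ of Proposition \ref{prop: Richelot properties}(a), and squarefreeness of $f H_{1}H_{2}H_{3}$ to see that $\mathcal{Z}$ is reduced with the claimed degrees). Setting $\psi := \pi'_{*} \circ \pi^{*} : J = \Pic^{0}(C) \to \Pic^{0}(C') = J'$, this is a homomorphism of abelian varieties defined over $K(R_{G})$, since $\mathcal{Z}$, $\pi$ and $\pi'$ are.

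Next I would pin down $\ker\psi$ and show that $\psi$ is a $(2,2)$-isogeny. Using the explicit equations of the correspondence one checks directly that $\psi$ kills each divisor class $e_{R_{G_{i}}} = (r_{i,1}, 0) + (r_{i,2}, 0) - 2(\infty)$ with $R_{G_{i}} = \{r_{i,1}, r_{i,2}\}$ (the ramification of $\pi$ and $\pi'$ is concentrated over the Weierstrass points, i.e.\ over the roots of the $G_{j}$ and $H_{j}$, which forces $\pi'_{*}\pi^{*} e_{R_{G_{i}}}$ to be principal on $C'$); hence $N = \{e_{\varnothing}, e_{R_{G_{1}}}, e_{R_{G_{2}}}, e_{R_{G_{3}}}\} \subseteq \ker\psi$. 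To get equality I would compute the composite of $\mathcal{Z}$ with its transpose, i.e.\ $\hat\psi \circ \psi$, where $\hat\psi : J' \to J$ is the homomorphism $\pi_{*} \circ (\pi')^{*}$ induced by the transpose correspondence (this is the dual isogeny of $\psi$ with respect to the canonical principal polarizations), and verify the identity $\hat\psi \circ \psi = [2]_{J}$. This forces $\ker\psi \subseteq J[2]$; since $N$ is a \emph{maximal} Weil isotropic subgroup of $J[2]$ we have $|N| = |J[2]|^{1/2} = 4$, and a degree count then gives $\ker\psi = N$, proving (a). By symmetry $\psi \circ \hat\psi = [2]_{J'}$, so $\ker\hat\psi \subseteq J'[2]$ and $|\ker\hat\psi| = \deg\hat\psi = \deg\psi = 4$.

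For part (b), the relation $\hat\psi \circ \psi = [2]_{J}$ gives $\psi(J[2]) \subseteq \ker\hat\psi$, and since $|\psi(J[2])| = |J[2]|/|N| = 4 = |\ker\hat\psi|$ we get $\psi(J[2]) = \ker\hat\psi$. It then suffices to identify $\ker\hat\psi$. Here I would invoke the self-duality of the Richelot construction: up to an isomorphism on the target, $\hat\psi$ is the isogeny produced by Theorem \ref{prop: Richelot isogeny} applied to the curve $C'$ and the quadratic splitting $(H_{1}, H_{2}, H_{3})$ of $H_{1}H_{2}H_{3}$ (note that $\det(H)$ is a nonzero multiple of $\det(G)^{2}$, so the hypotheses apply, and the transpose correspondence is, after relabeling, the Richelot correspondence of $(H_{1}, H_{2}, H_{3})$ — this is just the symmetry of the construction in the pair $(G_{i}) \leftrightarrow (H_{i})$). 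Applying part (a) to $\hat\psi$ identifies $\ker\hat\psi$ with the maximal Weil isotropic subgroup of $J'[2]$ corresponding to $[(R_{H_{1}}, R_{H_{2}}, R_{H_{3}})]$, which equals $\Ri(R_{G})$ by Proposition \ref{prop: Richelot properties}. This is exactly assertion (b).

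The main obstacle is concentrated in the explicit parts of the first two steps: writing down the equations of $\mathcal{Z}$ so that the two projections genuinely have degree $2$ and the target curve is the stated model $y^{2} = \det(G)^{-1}D H_{1}H_{2}H_{3}$, verifying that $\psi$ kills $N$, and carrying out the composition-of-correspondences calculation that produces $\hat\psi \circ \psi = [2]$. Once these identities are in hand everything else is formal bookkeeping with orders of kernels and with Proposition \ref{prop: Richelot properties}. Since all of this is carried out in \cite{smith2005explicit}, \S8.4, in practice I would cite it there and record explicitly only the rationality statement and the kernel/image identifications needed in the sequel.
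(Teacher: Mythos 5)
The paper does not prove this theorem itself: it states explicitly that it ``is proven in \cite{smith2005explicit}, \S8.4'' and records only the statement, which is exactly the pragmatic route you propose in your closing paragraph. Your correspondence-theoretic sketch --- build $\mathcal{Z}$ over $K(R_G)$, set $\psi = \pi'_{*} \circ \pi^{*}$, verify $N \subseteq \ker\psi$ via the ramification, establish $\hat\psi \circ \psi = [2]_{J}$ to bound $\ker\psi$ inside $J[2]$ and count degrees, and then use the symmetry of the Richelot construction together with Proposition \ref{prop: Richelot properties} to identify $\psi(J[2]) = \ker\hat\psi$ for part (b) --- is a faithful outline of the argument carried out in that reference, so there is no substantive discrepancy with the paper.
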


The isogeny $\psi : J \to J'$ in the above theorem is known as the \textit{Richelot isogeny} corresponding to $N$, and $C'$ (resp. $J'$) is often referred to as the \textit{Richelot isogenous} curve (resp. Jacobian).  This Richelot isogeny $\psi$ is given explicitly in \cite{bending1999curves} \S3 as follows.  Any divisor class in $\Pic^{0}(C)$ can be represented by a divisor of the form $(x, y) - (\alpha, 0)$, with $\alpha \in R_{G_{1}}$.  We set $\psi([(x, y) - (\alpha, 0)]) = [(z_{1}, t_{1}) - (z_{2}, -t_{2})]$, where $z_{1}$ and $z_{2}$ are the roots of the polynomial in $G_{2}(x)H_{2}(z) + G_{3}(x)H_{3}(z) \in F[z]$, and 
\begin{equation}\label{explicit Richelot isogeny}
yt_{i} = \det(G)^{-1}G_{2}(x)H_{2}(z_{i})(x - z_{i})
\end{equation}
for $i = 1, 2$.

\begin{rmk}

If $(G_{1}, G_{2}, G_{3})$ is a quadratic splitting of $f(x)$ and $C$ and $J$ are defined as above, and if $\det(G) = 0$ then $J$ is isogenous to the product of two elliptic curves (see, for instance, Chapter 14 of \cite{flynn1996prolegomena}).  If $(H_{1}, H_{2}, H_{3}) = \Ri((G_{1}, G_{2}, G_{3}))$, then $\det(H) = 2\det(G)^{2}$, so $\det(H) = 0$ also.

\end{rmk}

\section{Description of $T$ via $\zz_{2}$-lattices}

As usual, let 
$$T_{2}(J) = \lim_{\leftarrow n} J[2^{n}]$$
denote the $2$-adic Tate module of $J$; it is a free $\zz_{2}$-module of rank $4$.  Let 
$$V_{2}(J) = T_{2}(J) \otimes \qq_{2}.$$
Then $V_{2}(J)$ is a $4$-dimensional vector space over $\qq_{2}$ which contains the rank-$4$ $\zz_{2}$-lattice $T_{2}(J)$.  Clearly, $\qq_{2}^{\times}$ acts upon the set of all rank-$4$ $\zz_{2}$-lattices in $V_{2}(J)$ as follows: for any such lattice $\Lambda$ and any $a \in \qq_{2}^{\times}$, then $a\Lambda := \{a\lambda \ | \ \lambda \in \Lambda\}$, which is also a rank-$4$ $\zz_{2}$-lattice in $V_{2}(E)$.  Now let $\mathcal{L}$ be the set of equivalence classes of such lattices, where two lattices $\Lambda$ and $\Lambda'$ are equivalent if there exists $a \in \qq_{2}^{\times}$ such that $a\Lambda = \Lambda'$.  The equivalence class in $\mathcal{L}$ of a lattice $\Lambda$ will be denoted $[\Lambda]$.

We will often use ``$<$", ``$\leq$", etc. to indicate inclusion of $\zz_{2}$-lattices inside $V_{2}(J)$, or to indicate inclusion of subgroups of $J[\bar{K}]$.  From now on, let $\Lambda_{0} = T_{2}(J)$.

\begin{prop}\label{prop: description of L}

a) Each element of $\mathcal{L}$ is represented uniquely by a lattice which contains $\Lambda_{0}$ but doesn't contain $\frac{1}{2}\Lambda_{0}$.

b) There is a bijection between elements of $\mathcal{L}$ and finite subgroups of $J[2^{\infty}]$ which do not contain all of $J[2]$.

\end{prop}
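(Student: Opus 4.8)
The plan is to prove both parts by working with the standard correspondence between lattices containing $\Lambda_0$ and subgroups of $V_2(J)/\Lambda_0 \cong (\qq_2/\zz_2)^4 \cong J[2^\infty]$.

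For part (a), I would first observe that any rank-$4$ $\zz_2$-lattice $\Lambda \subset V_2(J)$ is commensurable with $\Lambda_0$, so after scaling by a suitable power of $2$ we may assume $\Lambda \supseteq \Lambda_0$ but $\Lambda \not\supseteq 2^{-1}\Lambda_0$ --- indeed, among the lattices $2^n \Lambda$ that contain $\Lambda_0$ there is a largest index at which this still holds, equivalently a unique representative $\Lambda'$ with $\Lambda_0 \subseteq \Lambda'$ and $\frac12\Lambda_0 \not\subseteq \Lambda'$. The content is the \emph{uniqueness}: if $\Lambda$ and $a\Lambda$ both satisfy these two conditions for some $a \in \qq_2^\times$, I must show $a \in \zz_2^\times$, so that $a\Lambda = \Lambda$. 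Writing $a = 2^m u$ with $u \in \zz_2^\times$, we have $a\Lambda = 2^m\Lambda$, and I would derive a contradiction from $m \neq 0$: if $m \geq 1$ then $2^m\Lambda \subseteq \Lambda$, and combined with $\Lambda_0 \subseteq 2^m\Lambda$ this forces $2^{-m}\Lambda_0 \subseteq \Lambda$, contradicting $\frac12\Lambda_0 \not\subseteq \Lambda$ (as $m \ge 1$); the case $m \le -1$ is symmetric with the roles of $\Lambda$ and $2^m\Lambda$ reversed. This is short and I expect no obstacle here.

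For part (b), I would set up the bijection explicitly: to a lattice $\Lambda$ with $\Lambda_0 \subseteq \Lambda$, associate the finite subgroup $\Lambda/\Lambda_0 \subseteq V_2(J)/\Lambda_0$, and then transport this via the canonical isomorphism $V_2(J)/\Lambda_0 = V_2(J)/T_2(J) \cong J[2^\infty]$ (where a class $v + T_2(J)$ with $2^n v \in T_2(J)$ maps to the image of $2^n v$ under $T_2(J) \twoheadrightarrow J[2^n]$). Since $\Lambda$ is a rank-$4$ lattice containing $\Lambda_0$, the quotient $\Lambda/\Lambda_0$ is a finite subgroup of $(\qq_2/\zz_2)^4$, and conversely every finite subgroup $N$ of $V_2(J)/\Lambda_0$ is of the form $\Lambda/\Lambda_0$ for the preimage lattice $\Lambda$ of $N$ under $V_2(J) \twoheadrightarrow V_2(J)/\Lambda_0$ (which is a rank-$4$ $\zz_2$-lattice because $N$ is finite, hence killed by some $2^n$, giving $\Lambda_0 \subseteq \Lambda \subseteq 2^{-n}\Lambda_0$). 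So lattices containing $\Lambda_0$ correspond bijectively to finite subgroups of $J[2^\infty]$. Using part (a), each class in $\mathcal{L}$ has a unique representative $\Lambda$ with $\Lambda_0 \subseteq \Lambda$, $\frac12\Lambda_0 \not\subseteq \Lambda$; I then need to check that the condition ``$\frac12\Lambda_0 \not\subseteq \Lambda$'' corresponds exactly to ``$N = \Lambda/\Lambda_0$ does not contain all of $J[2]$''. This is immediate from the correspondence, since $\frac12\Lambda_0/\Lambda_0$ is precisely the copy of $J[2] = \frac12 T_2(J)/T_2(J)$ inside $V_2(J)/\Lambda_0$, and $\frac12\Lambda_0 \subseteq \Lambda$ iff $\frac12\Lambda_0/\Lambda_0 \subseteq \Lambda/\Lambda_0 = N$.

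The main (mild) obstacle is bookkeeping: making sure the identification $V_2(J)/T_2(J) \cong J[2^\infty]$ is stated correctly and that the two conditions in part (a) translate without sign or indexing errors into the group-theoretic statement of part (b). None of this is deep, but I would state the isomorphism $V_2(J)/T_2(J) \cong J[2^\infty]$ carefully at the outset and then let both parts follow from the elementary lattice arguments above.
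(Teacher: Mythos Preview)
Your proposal is correct and follows essentially the same approach as the paper. The only minor differences are presentational: for existence in part (a) the paper argues via compactness of $\Lambda_0$ (an open cover by $2^{-n}\Lambda \cap \Lambda_0$) rather than invoking commensurability directly, and for part (b) the paper works level-by-level through $\Lambda/\Lambda_0 \cong 2^n\Lambda/2^n\Lambda_0 \subset \Lambda_0/2^n\Lambda_0 \cong J[2^n]$ rather than using the single global identification $V_2(J)/T_2(J) \cong J[2^\infty]$ as you do; your route is slightly more streamlined but the content is the same.
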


\begin{proof}

Let $\Lambda$ be any rank-$4$ $\zz_{2}$-lattice in $V_{2}(J)$.  Consider the sequence of $\zz_{2}$-lattices $\{2^{-n}\Lambda \cap \Lambda_{0}\}_{n = 0}^{\infty}$.  Each lattice $2^{-n}\Lambda \cap \Lambda_{0}$ is an open subgroup of $\Lambda_{0}$ (in the $2$-adic topology); moreover, since $\cup_{n = 0}^{\infty} 2^{-n}\Lambda = V_{2}(J)$, this sequence of lattices forms an open cover of $\Lambda_{0}$.  But $\Lambda_{0}$ is compact, so this cover has a finite subcover.  It follows that for some $N \geq 0$, $2^{-N}\Lambda \geq \Lambda_{0}$.  Now there must be a maximal $M \geq 0$ such that $2^{-N}\Lambda \geq 2^{-M}\Lambda_{0}$, or else $2^{-N}\Lambda \geq \cup_{n = 0}^{\infty} 2^{-n}\Lambda_{0} = V_{2}(J)$, which is impossible.  Then $2^{M - N}\Lambda$ contains $\Lambda$ but not $\frac{1}{2}\Lambda$, and the first part of (a) follows from the fact that $[2^{M - N}\Lambda] = [\Lambda]$.  To prove uniqueness, suppose that $\Lambda'$ and $\Lambda$ are two lattices in the same class which each contain $\Lambda_{0}$ but not $\frac{1}{2}\Lambda_{0}$.  Then $\Lambda' = a\Lambda$ for some $a \in \qq_{2}^{\times}$.  Let $v_{2}(a)$ be the $2$-adic valuation of $a$.  If $v_{2}(a) > 0$, then $a\Lambda$ does not contain $\Lambda_{0}$, which is a contradiction, and if $v_{2}(a) < 0$, then $a\Lambda$ contains $\frac{1}{2}\Lambda_{0}$, which is a contradiction.  Thus, $v_{2}(a) = 0$ and $a \in \zz_{2}^{\times}$, so $\Lambda' = a\Lambda = \Lambda$, and part (a) is proved.

To prove (b), it suffices by (a) to construct a bijection between the set of lattices $\Lambda$ which contain $\Lambda_{0}$ but not $\frac{1}{2}\Lambda_{0}$ and the set of finite subgroups of $J[2^{\infty}]$ which do not contain all of $J[2^{n}]$ for any $n \geq 1$.  Let $\Lambda$ be a lattice containing $\Lambda_{0}$ but not $\frac{1}{2}\Lambda_{0}$.  Then $\Lambda / \Lambda_{0}$ is clearly a finite $\zz_{2}$-module.  Choose $n \geq 0$ such that $2^{n}$ kills $\Lambda / \Lambda_{0}$.  Then clearly $2^{n}\Lambda_{0} \leq 2^{n}\Lambda \leq \Lambda_{0}$, so we may identify $\Lambda / \Lambda_{0} \cong 2^{n}\Lambda / 2^{n}\Lambda_{0}$ with a subgroup of $\Lambda_{0} / 2^{n}\Lambda_{0}$.  But $\Lambda_{0} = T_{2}(J)$, and $T_{2}(J) / 2^{n}T_{2}(J)$ is naturally identified with $J[2^{n}]$, so we may identify $N := \Lambda / \Lambda_{0}$ with a subgroup of $J[2^{n}] < J[2^{\infty}]$.  The fact that $\Lambda$ does not contain $\frac{1}{2}\Lambda_{0}$ implies that $N$ does not contain $J[2]$.  Conversely, let $N$ be a finite subgroup of $J[2^{\infty}]$ which does not contain $J[2]$.  Then 
$N$ is a subgroup of $J[2^{n}]$ for some $n \geq 0$, and furthermore, it has an obvious $\zz_{2}$-module structure.  By the inverse limit definition of $T_{2}(J)$, a subgroup of $J[2^{n}]$ lifts uniquely to a subgroup $M$ of finite index of $T_{2}(J)$ which contains $2^{n}T_{2}(J)$, and such that there is a canonical isomorphism $M / 2^{n}T_{2}(J) \cong N$ which respects the $\zz_{2}$-module structure.  It is clear that $M$ is a sublattice of $T_{2}(J)$.  Let $\Lambda = 2^{-n}M$.  Then there are natural isomorphisms of $\zz_{2}$-modules
\begin{equation}\label{lattice to subgroup} \Lambda / T_{2}(J) \cong 2^{n}\Lambda / 2^{n}T_{2}(J) \cong M / 2^{n}T_{2}(J) \cong N. \end{equation}
By construction, this $\Lambda$ is uniquely determined by $N$.  Moreover, the fact that $N$ does not contain $J[2]$ implies that $\Lambda$ does not contain $\frac{1}{2}\Lambda_{0}$.
\end{proof}

We define a graph $S$ whose vertices form a subset of $\mathcal{L}$ as follows.  Let the set of vertices $|S|$ be all elements of $\Lambda$ corresponding under the bijection given by Proposition \ref{prop: description of L} to maximal Weil isotropic subgroups $N < J[2^{n}]$ which do not contain $J[2]$ for some $n \geq 0$.  Two vertices in $|S|$ are connected by an edge if they can be written as $[\Lambda]$ and $[\Lambda']$, where $\Lambda < \Lambda'$ and $\Lambda' / \Lambda \cong \zz /2\zz \oplus \zz / 2\zz$.  It is easy to see that this relation is symmetric, so the edge set of this graph is well-defined.  Set $v_{0} = [\Lambda_{0}] \in |S|$.

\begin{lemma}\label{prop: size of maximal isotropic subgroup}

Let $M$ be a free module of (finite) even rank $2m$ over a finite commutative ring $\Sigma$, equipped with a nondegenerate alternating bilinear pairing $\langle \cdot, \cdot \rangle : M \times M \to \Sigma$.  Then a maximal isotropic submodule of $M$ is maximal isotropic if and only if it has cardinality $|\Sigma|^{m}$.

\end{lemma}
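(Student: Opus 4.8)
The plan is to reduce the statement to a counting argument on isotropic submodules of a symplectic space over the residue field, combined with a filtration/induction argument to handle the fact that $\Sigma$ need not be a field. First I would recall the standard fact (which I will reprove in the needed generality) that over a \emph{field} $F$, a nondegenerate alternating form on a $2m$-dimensional space has all maximal isotropic subspaces of dimension exactly $m$; this is the classical symplectic basis construction. So for $\Sigma = F$ a field the lemma amounts to: an isotropic subspace is maximal if and only if its dimension is $m$, i.e.\ its cardinality is $|F|^m$. The ``if'' direction is immediate from a dimension count (an isotropic subspace of dimension $m$ equals its own orthogonal complement, hence is not properly contained in any isotropic subspace), and the ``only if'' direction follows by extending any isotropic subspace of dimension $<m$ to a larger one using a vector in the orthogonal complement outside it.

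Next I would treat the general finite commutative ring $\Sigma$. Since $\Sigma$ is finite and commutative it is a finite product of finite local rings, and the pairing decomposes compatibly with this product decomposition; so I may assume $\Sigma$ is local, with maximal ideal $\mathfrak{m}$ and residue field $F = \Sigma/\mathfrak{m}$. I would then induct on the length of $\Sigma$. The key step is to pass between $M$ and $M/\mathfrak{m}M$: given an isotropic submodule $L \subseteq M$, its image $\bar L \subseteq M/\mathfrak{m}M$ is isotropic for the induced (nondegenerate, alternating) form over $F$, and conversely any isotropic $F$-subspace of $M/\mathfrak{m}M$ lifts to an isotropic $\Sigma$-submodule of $M$ of the ``expected'' size by Nakayama together with a lifting-of-isotropic-generators argument. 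Counting: a free $\Sigma$-submodule of rank $r$ that is a direct summand has cardinality $|\Sigma|^r$, and $|\Sigma|^m = |F|^m \cdot |\mathfrak{m}M\text{-contribution}|$ works out so that maximality over $\Sigma$ corresponds exactly to $\bar L$ being maximal isotropic over $F$, which by the field case means $\dim_F \bar L = m$, which means $|L| = |\Sigma|^m$. Care is needed because not every isotropic submodule is free or a direct summand a priori; I would argue that a maximal isotropic submodule $L$ satisfies $L = L^\perp$ (its orthogonal complement), and then that $L = L^\perp$ forces $L$ to be a direct summand of rank $m$ by a reflexivity/duality argument using nondegeneracy — this is where the alternating hypothesis and the structure of $\Sigma$ interact.

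The main obstacle I expect is precisely this last point: over a non-reduced ring, an isotropic submodule need not be free, so the clean ``$\dim + \dim = 2m$'' dichotomy from the field case can fail, and one must be careful that ``maximal isotropic'' genuinely coincides with the cardinality condition rather than merely implying one inequality. I would handle this by proving the sharper statement that a maximal isotropic $L$ equals $L^\perp := \{x \in M : \langle x, L\rangle = 0\}$, using that nondegeneracy identifies $M$ with $\mathrm{Hom}_\Sigma(M,\Sigma)$ so that $|L|\cdot|L^\perp| = |M| = |\Sigma|^{2m}$, and then showing $L \subseteq L^\perp$ (isotropy) together with maximality forces equality, whence $|L|^2 = |\Sigma|^{2m}$ and $|L| = |\Sigma|^m$. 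The converse — that $|L| = |\Sigma|^m$ and $L$ isotropic imply $L$ maximal isotropic — then follows because any isotropic $L' \supseteq L$ satisfies $L' \subseteq (L')^\perp \subseteq L^\perp$, and $|L^\perp| = |\Sigma|^{2m}/|L| = |\Sigma|^m = |L|$ gives $L' = L$. This cardinality-duality argument sidesteps having to analyze the module structure of $L$ directly and works uniformly over any finite commutative $\Sigma$.
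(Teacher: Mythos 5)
Your final ``cardinality--duality'' paragraph is exactly the paper's proof: pass to the orthogonal complement $L^\perp$, observe that $L$ is maximal isotropic iff $L = L^\perp$, and use nondegeneracy to compute $|L^\perp| = |\Sigma|^{2m}/|L|$, from which both directions follow by counting. The lengthy preamble on the field case, the decomposition into local rings, induction on length, and Nakayama lifting is scaffolding that you correctly abandon; none of it is needed, and you should cut it rather than present it as part of the argument.

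The one place where both you and the paper elide a genuine point is the equality $|L| \cdot |L^\perp| = |\Sigma|^{2m}$, which you present as following directly from the identification $M \cong \mathrm{Hom}_\Sigma(M,\Sigma)$. Under that identification $L^\perp$ corresponds to $\mathrm{Hom}_\Sigma(M/L,\Sigma)$, so the claim reduces to $|\mathrm{Hom}_\Sigma(P,\Sigma)| = |P|$ for the finite $\Sigma$-module $P = M/L$. This is \emph{not} a formal consequence of nondegeneracy of the form on $M$; it holds exactly when $\Sigma$ is self-injective (quasi-Frobenius), which is true for $\Sigma = \zz/2^n\zz$ --- the only case the paper uses --- but false for general finite commutative rings. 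For instance, with $\Sigma = \ff_2[x,y]/(x^2,xy,y^2)$ and $P = \Sigma/\mathfrak{m} \cong \ff_2$ one has $\mathrm{Hom}_\Sigma(P,\Sigma) = \mathrm{Ann}(\mathfrak{m}) = \mathfrak{m}$ of order $4 \neq |P|$; and correspondingly $N = \mathfrak{m}\oplus\mathfrak{m} \subset \Sigma^2$ with the standard symplectic form satisfies $N = N^\perp$ (so is maximal isotropic) yet has order $16 \neq |\Sigma|^1 = 8$, contradicting the lemma as literally stated. So your proof, like the paper's, is correct for the intended $\Sigma = \zz/2^n\zz$ but tacitly uses the quasi-Frobenius property at the step where the cardinality of the dual of a quotient is asserted; you should either restrict the hypothesis or insert the justification $|\mathrm{Hom}_{\zz/2^n\zz}(P,\zz/2^n\zz)| = |P|$ for finite $P$ annihilated by $2^n$.
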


\begin{proof}

Let $N$ be an isotropic submodule of $M$, and let $N^{\bot} = \{m \in M \ | \ \langle m, n \rangle = 0 \ \forall n \in N\}$.  Since $N$ is isotropic, $N \subseteq N^{\bot}$.  Clearly, $N$ is a maximal isotropic submodule if and only if $N = N^{\bot}$; thus, it suffices to show that $|N^{\bot}| = |M| / |N| = |\Sigma|^{2m} / |N|$.

Define a map from $M$ to its dual space $M^{*}$ given by $m \mapsto \langle \cdot, m \rangle$.  Since $\langle \cdot, \cdot \rangle$ is nondegenerate, this is an isomorphism.  Moreover, it maps $N^{\bot}$ to the submodule $\{\alpha \in M^{*} \ | \ \alpha(N) = 0\} \cong (M / N)^{*}$, which has cardinality $|M / N|$.  Thus, $N^{\bot}$ has cardinality equal to $|M| / |N|$, as desired.

\end{proof}

\begin{lemma}\label{prop: 15 subgroups}

Let $M$ be a vector space of dimension $4$ over $\ff_{2}$, equipped with a nondegenerate alternating bilinear pairing $\langle \cdot, \cdot \rangle : M \times M \to \ff_{2}$.  Then there are exactly $15$ maximal isotropic subspaces of $M$.

\end{lemma}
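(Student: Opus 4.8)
The plan is to reduce the statement to a count of two-dimensional isotropic subspaces of $M$ and then to carry out that count by a double-counting argument over ordered bases.

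First I would record the basic linear-algebra facts forced by nondegeneracy. For any subspace $N \leq M$ we have $\dim N + \dim N^{\bot} = 4$, and if $N$ is isotropic then $N \subseteq N^{\bot}$, so $\dim N \leq 2$; thus every isotropic subspace of $M$ has dimension at most $2$. On the other hand, Lemma \ref{prop: size of maximal isotropic subgroup}, applied with $\Sigma = \ff_{2}$ and $2m = 4$, says that an isotropic subspace is maximal isotropic if and only if it has cardinality $|\ff_{2}|^{2} = 4$, i.e. dimension $2$. Hence the maximal isotropic subspaces of $M$ are exactly its two-dimensional isotropic subspaces, and it remains to count those.

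Next I would set up the double count. Let $X$ denote the set of ordered pairs $(v,w)$ of linearly independent vectors of $M$ with $\langle v, w \rangle = 0$. To compute $|X|$: there are $2^{4}-1 = 15$ choices for $v \neq 0$; for each such $v$, the functional $w \mapsto \langle v, w\rangle$ is nonzero by nondegeneracy, so its kernel $v^{\bot}$ has dimension $3$, and since $v \in v^{\bot}$ (the form is alternating), the number of admissible $w$, namely those in $v^{\bot} \setminus \langle v \rangle$, is $2^{3} - 2 = 6$; hence $|X| = 15 \cdot 6 = 90$. Now I claim the ``span'' map $(v,w) \mapsto \ff_{2}v + \ff_{2}w$ carries $X$ onto the set of two-dimensional isotropic subspaces of $M$. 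Indeed, for $(v,w) \in X$ the subspace $\ff_{2}v + \ff_{2}w$ is isotropic, since $\langle v,v\rangle = \langle w,w\rangle = 0$ because the form is alternating, $\langle w, v\rangle = -\langle v,w\rangle = 0$ because $-1 = 1$ in $\ff_{2}$, and then bilinearity forces the form to vanish on all of $\ff_{2}v + \ff_{2}w$; conversely, every two-dimensional isotropic subspace is the span of each of its ordered bases. The fibre of this map over a given two-dimensional isotropic subspace $L$ is precisely the set of ordered bases of $L$, which has $(2^{2}-1)(2^{2}-2) = 6$ elements. Therefore the number of two-dimensional isotropic subspaces, and hence the number of maximal isotropic subspaces, is $90/6 = 15$.

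I do not anticipate a real obstacle. The only points requiring care are that the span of a pair in $X$ really is isotropic --- this genuinely uses the \emph{alternating} hypothesis, not mere skew-symmetry, in characteristic $2$ --- and that all fibres of the span map (over subspaces in its image) have the same cardinality $6$, which is what legitimizes the final division. As an alternative one could instead deduce the count from transitivity of $\Sp(M)$ on maximal isotropic subspaces together with $|\Sp_{4}(\ff_{2})| = 720$, but the double count above has the advantage of not requiring the order of a parabolic subgroup.
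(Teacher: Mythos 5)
Your proof is correct. It differs from the paper's treatment, which does not give a full argument but instead appeals to the fact that the count is well known, suggesting either a brute-force check of all $2$-dimensional subspaces against a symplectic basis, or (specializing to $M = J[2]$) the bijection from maximal Weil isotropic subgroups of $J[2]$ to permutation equivalence classes of pair-partitions of the six branch points, of which there are $\frac{1}{3!}\binom{6}{2}\binom{4}{2}\binom{2}{2} = 15$. Your double-counting argument --- showing via Lemma \ref{prop: size of maximal isotropic subgroup} that maximal isotropic means $2$-dimensional, counting the $15 \cdot 6 = 90$ ordered pairs of independent orthogonal vectors, and dividing by the $6$ ordered bases per plane --- is self-contained, works for any nondegenerate alternating form on $\ff_{2}^{4}$ without reference to the geometry of the curve, and is arguably cleaner than the brute-force route the paper gestures at, at the cost of a slightly longer write-up. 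One small remark: you correctly flag that the isotropy of $\ff_{2}v + \ff_{2}w$ uses the alternating hypothesis, which is the only subtle point; everything else is routine linear algebra over $\ff_{2}$.
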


\begin{proof}

This is well known and can be proved using elementary methods, for instance, choosing a symplectic basis of $M$ and checking all subspaces of dimension $2$.  Alternately, one can prove the lemma in the case of $M = J[2]$ by noting that, as in the proof of Proposition \ref{prop: existence of decoration genus 2}(a), there are exactly $15$ permutation equivalence class representatives $R \in \mathcal{R}$ with $|R| = B$, which are in one-to-one correspondence with the maximal Weil isotropic subgroups of $J[2]$ as in \S2.

\end{proof}

The following is well known (see, for instance, \cite{milne1986abelian}, Chapter V, Theorem 16.4 combined with Proposition 16.8).

\begin{lemma}\label{prop: Weil pairing under isogeny}

Let $J$ be the Jacobian of a curve, whose principal polarization determines a Weil pairing $\displaystyle e_{2} : T_{2}(J) \times T_{2}(J) \to \lim_{\leftarrow n} \mu_{2^{n}}$.  Let $J'$ be another abelian variety, and let $\phi : J \to J'$ be an isogeny whose kernel is a maximal Weil isotropic subgroup of $J[2]$.  Then $J'$ has a principal polarization which determines a Weil pairing $\displaystyle e_{2}' : T_{2}(J') \times T_{2}(J') \to \lim_{\leftarrow n} \mu_{2^{n}}$, and $e_{2}'(\phi(P), \phi(Q)) = e_{2}(P, Q)^{2}$ for all $P, Q \in J[2^{\infty}]$.

\end{lemma}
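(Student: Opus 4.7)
The plan is to construct a principal polarization on $J'$ by descending the polarization of $J$ along $\phi$, and then compare the two Weil pairings by a short formal computation.

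I would begin by recalling the standard setup: the principal polarization of $J$ corresponds to an isomorphism $\lambda_{J} : J \to J^{\vee}$, and the Weil pairing factors as $e_{2}(P, Q) = \tilde{e}_{2}(P, \lambda_{J}(Q))$, where $\tilde{e}_{2} : T_{2}(J) \times T_{2}(J^{\vee}) \to \lim_{\leftarrow n} \mu_{2^{n}}$ is the canonical Tate-module pairing. The dual isogeny $\hat{\phi} : (J')^{\vee} \to J^{\vee}$ of $\phi$ satisfies the functoriality identity $\tilde{e}_{2}(\phi(P), \beta) = \tilde{e}_{2}(P, \hat{\phi}(\beta))$, and an analogous identity holds on $J'$.

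The heart of the argument is the descent of $\lambda_{J}$. Setting $N = \ker \phi$, the inclusion $N \subset J[2]$ shows that $2\lambda_{J} : J \to J^{\vee}$ kills $N$, so it factors uniquely as $2\lambda_{J} = \mu \circ \phi$ for some $\mu : J' \to J^{\vee}$. The maximal Weil isotropy of $N$ --- which says that $N$ equals its own orthogonal complement inside $J[2]$ under the Weil pairing --- is what allows $\mu$ to be factored further as $\hat{\phi} \circ \lambda_{J'}$ for a unique homomorphism $\lambda_{J'} : J' \to (J')^{\vee}$, giving the key identity $\hat{\phi} \circ \lambda_{J'} \circ \phi = 2\lambda_{J}$. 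A degree count ($\deg \phi = |N| = 2^{\dim J}$ combined with $\deg(2\lambda_{J}) = 4^{\dim J}$) then forces $\deg \lambda_{J'} = 1$, while symmetry and ampleness are inherited from $\lambda_{J}$, so $\lambda_{J'}$ is a principal polarization. I expect this descent step to be the main obstacle, as it relies on Mumford's general theory relating isotropic subgroups of $J[n]$ to polarizations of isogenous abelian varieties, and it is the only place where the \emph{maximal} isotropy hypothesis (not just isotropy) is essential.

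Granting $\lambda_{J'}$, the Weil pairing identity is purely formal: for $P, Q \in J[2^{\infty}]$,
\begin{align*}
e_{2}'(\phi(P), \phi(Q)) &= \tilde{e}_{2}(\phi(P), \lambda_{J'}(\phi(Q))) = \tilde{e}_{2}(P, \hat{\phi}(\lambda_{J'}(\phi(Q)))) \\
&= \tilde{e}_{2}(P, 2\lambda_{J}(Q)) = \tilde{e}_{2}(P, \lambda_{J}(Q))^{2} = e_{2}(P, Q)^{2},
\end{align*}
where the second equality uses the functoriality of $\tilde{e}_{2}$ under $\hat{\phi}$, the third uses the descent identity $\hat{\phi} \circ \lambda_{J'} \circ \phi = 2\lambda_{J}$, and the fourth uses $\zz_{2}$-bilinearity of $\tilde{e}_{2}$. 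This yields the stated relation $e_{2}'(\phi(P), \phi(Q)) = e_{2}(P, Q)^{2}$.
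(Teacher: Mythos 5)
The paper does not actually prove this lemma: it is stated as ``well known'' and a pointer is given to Milne's \emph{Abelian Varieties} (Chapter V, Theorem 16.4 combined with Proposition 16.8). So there is no argument in the paper to compare against; what you have written is a correct expansion of exactly that cited material, and the structure (descend $2\lambda_J$ along $\phi$, then run the formal Weil-pairing computation) is the standard one. Two points worth tightening if you were to include this as an actual proof. First, the factoring $\mu = \hat\phi\circ\lambda_{J'}$ is not obtained by elementary diagram chasing; it comes from descending a line bundle $L$ on $J$ with $\phi_L = 2\lambda_J$ to $J' = J/N$, which Mumford's descent theorem permits precisely when $N \subset \ker(2\lambda_J) = J[2]$ is isotropic for the commutator pairing $e^L$ --- one then identifies $e^L|_{J[2]}$ with the Weil pairing $\bar e_2^{(1)}$ so that ``Weil isotropic'' is the correct hypothesis. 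You do gesture at this (``Mumford's general theory''), which is the right reference. Second, only isotropy is needed for the descent itself; maximality enters solely in the degree count $\deg\lambda_{J'} = \deg(2\lambda_J)/(\deg\phi\cdot\deg\hat\phi) = 2^{2\dim J}/2^{2\dim J} = 1$ showing $\lambda_{J'}$ is an isomorphism. The closing computation $e_2'(\phi P,\phi Q) = \tilde e_2(P,\hat\phi\lambda_{J'}\phi Q) = \tilde e_2(P,2\lambda_J Q) = e_2(P,Q)^2$ is correct. So: the paper cites, you prove, and your proof is sound.
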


\begin{prop}\label{prop: S is 15-regular}

With the above definition, $S$ is a connected, $15$-regular graph.

\end{prop}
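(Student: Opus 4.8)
The plan is to establish two things: that $S$ is $15$-regular, and that $S$ is connected. For the regularity claim, I would fix a vertex $v = [\Lambda]$ of $S$, where by Proposition \ref{prop: description of L}(a) I may take $\Lambda$ to be the unique representative containing $\Lambda_0$ but not $\frac12\Lambda_0$; under the bijection of Proposition \ref{prop: description of L}(b) this $v$ corresponds to a maximal Weil isotropic subgroup $N < J[2^n]$ (not containing $J[2]$) for some $n$. The neighbors of $v$ in $S$ are the classes $[\Lambda']$ with either $\Lambda < \Lambda'$ and $\Lambda'/\Lambda \cong (\zz/2\zz)^2$, or $\Lambda'' < \Lambda$ with $\Lambda/\Lambda'' \cong (\zz/2\zz)^2$. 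I expect it to be cleanest to work out, via Lemma \ref{prop: Weil pairing under isogeny} and the lattice-subgroup dictionary, that the $15$ neighbors all arise ``in the upward direction'' from $[\Lambda]$ after possibly rescaling — i.e., that passing to a Richelot-isogenous Jacobian (whose Tate module is a lattice between $\Lambda_0$ and $\frac12\Lambda_0$, or a rescaling thereof) corresponds exactly to the edges out of $v$. Concretely, the Richelot isogeny $\psi : J \to J'$ with kernel a maximal Weil isotropic $N_0 < J[2]$ induces on Tate modules an inclusion of lattices with quotient $(\zz/2\zz)^2$, and by Lemma \ref{prop: Weil pairing under isogeny} the pairing behaves well; the maximal Weil isotropic subgroups of $J[2]$ are exactly the $15$ of Lemma \ref{prop: 15 subgroups}, so there are exactly $15$ such neighbors. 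The key local computation is that these operations, read through the bijection, really do produce maximal Weil isotropic subgroups at the next level and that the $15$ sublattices-of-index-$4$ sitting appropriately relative to any fixed vertex are in bijection with the maximal isotropic subspaces of a symplectic $\ff_2^4$, which is Lemma \ref{prop: 15 subgroups} again.

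For connectedness, I would argue that every vertex can be joined by a path to $v_0 = [\Lambda_0]$, by induction on the ``level'' of the corresponding subgroup. Given $v = [\Lambda] \in |S|$ with $\Lambda \supsetneq \Lambda_0$, the quotient $\Lambda/\Lambda_0$ is a nontrivial finite $\zz_2$-module; pick a sublattice $\Lambda^\flat$ with $\Lambda_0 \subseteq \Lambda^\flat \subsetneq \Lambda$ and $\Lambda/\Lambda^\flat \cong (\zz/2\zz)^2$, chosen so that $\Lambda^\flat$ still corresponds to a maximal Weil isotropic subgroup (this is where I must use the structure of maximal isotropics — Lemma \ref{prop: size of maximal isotropic subgroup} pins down their cardinality, and one checks a maximal isotropic subgroup of $J[2^n]$ has, sitting just below it, maximal isotropic subgroups of $J[2^{n}]$ corresponding to index-$4$ sublattices). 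Then $[\Lambda^\flat]$ is a neighbor of $v$ lying at a strictly lower level, and we induct. The base case is that a vertex with $\Lambda = \Lambda_0$ is $v_0$ itself. I should be a little careful that rescaling to the canonical representative doesn't disrupt the inductive measure — I would phrase the induction on $\log_2[\Lambda : \Lambda\cap\Lambda_0] + \log_2[\Lambda_0 : \Lambda\cap\Lambda_0]$ or some such, which is unaffected by rescaling and strictly decreases along the edge constructed.

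The main obstacle I anticipate is the bookkeeping in the regularity argument: verifying that the edge relation on $\mathcal{L}$, restricted to $|S|$, exactly matches ``quotient isomorphic to $(\zz/2\zz)^2$ between lattices both corresponding to maximal Weil isotropic subgroups,'' and that this count is exactly $15$ at every vertex uniformly. The symmetry of the edge relation is stated to be easy, but I want the neighbor count to be visibly independent of the vertex; the natural way is to show $J'$ (the Richelot isogenous Jacobian attached to any maximal Weil isotropic $N_0 < J[2]$) is again the Jacobian of a genus-$2$ hyperelliptic curve, so the situation at $v$ is ``the same'' as at $v_0$ after translating along an isogeny, and then Lemma \ref{prop: 15 subgroups} applies verbatim. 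Making this ``sameness'' precise — i.e., that an isogeny of principally polarized Jacobians with maximal Weil isotropic kernel induces an isomorphism of the local neighborhoods of $S$ — is the technical heart, and I would spell it out using Lemma \ref{prop: Weil pairing under isogeny} to transport isotropy and Proposition \ref{prop: Richelot isogeny}(b) to transport the correspondence with $\bar{\mathcal{R}}$.
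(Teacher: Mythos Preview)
Your overall strategy matches the paper's: connectedness by descending induction from an arbitrary vertex back to $v_0$, and $15$-regularity via Lemma~\ref{prop: 15 subgroups}. The connectedness argument you sketch is essentially the paper's, though the paper makes the ``find a neighbor at lower level'' step explicit by a case analysis on the rank of $N_v$ as a $\zz/2^{m(v)}\zz$-module (rank $2$ versus rank $3$), constructing the subgroup $N'$ by hand in each case rather than appealing to an abstract existence statement.

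The genuine difference is in the regularity argument. You propose to transport the local picture at a general vertex $v$ back to $v_0$ via an explicit Richelot isogeny (iterated if necessary), invoking Proposition~\ref{prop: Richelot isogeny} to know that the isogenous surface is again a hyperelliptic Jacobian and hence ``the same situation'' holds there. The paper does \emph{not} use the Richelot isogeny for this proposition at all: it argues purely group-theoretically, showing that the neighbors of $v$ are in bijection with the maximal Weil isotropic subgroups of $J[2^{m(v)+1}]$ containing $N_v$, which in turn biject with the maximal isotropic subspaces of the symplectic $\ff_2$-space $(J/N_v)[2]$ (here Lemma~\ref{prop: Weil pairing under isogeny} supplies the polarization on the abstract quotient, no curve needed), and then Lemma~\ref{prop: 15 subgroups} finishes. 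The paper's route is more self-contained within \S3 and avoids any dependence on \S2; your route would also work but imports more machinery and requires an inductive transport along a chain of isogenies to reach a general vertex. One point the paper handles carefully that you only gesture at: it verifies that \emph{every} neighbor of $v$ --- including those represented by sublattices $\Lambda' < \Lambda_v$ --- corresponds, after the rescaling you mention, to a maximal Weil isotropic $N' \supset N_v$ in $J[2^{m(v)+1}]$; this uniform ``upward'' description is the crux of the bijection and deserves the explicit lattice bookkeeping the paper gives it.
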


\begin{proof}

Below, for $n \geq 1$ and any subgroup of $N < J[2^{n}]$, considered as a $\zz / 2^{n}\zz$-module, the \textit{rank} of $N$ is defined to be the dimension of $N \otimes_{\zz / 2^{n}\zz} \ff_{2}$ as a vector space over $\ff_{2}$.

We first show that $S$ is connected by proving that for any vertex $v \in |S|$, there is a path from $v$ to the root $v_{0}$.  This is trivially true for $v = v_{0}$.  For any vertex $v \in |S|$, let $N_{v}$ be the subgroup corresponding to it in the definition of $S$, and let $m(v)$ be the (unique) integer such that $N_{v} < J[2^{m(v)}]$ is a maximal Weil isotropic subgroup not containing $J[2]$.  Choose any vertex $v \neq v_{0}$, and assume inductively that the claim holds for any $n \leq m(v) - 1$.  Since $J[2^{m(v)}]$ is a free $\zz / 2^{m(v)}\zz$-module of rank $4$, $N_{v}$ may be viewed as a $\zz / 2^{m(v)}\zz$-module of rank $\leq 4$.  By Lemma \ref{prop: size of maximal isotropic subgroup}, the order of $N_{v}$ is $2^{2n}$, which forces its rank to be $\geq 2$.  If the rank of $N_{v}$ were equal to $4$, $N_{v}$ would contain $J[2]$, so the rank of $N_{v}$ is $2$ or $3$.  If the rank is $2$, then the condition on the order of $N_{v}$ forces $N_{v} \cong (\zz / 2^{m(v)}\zz)^{2}$.  In this case, let $N'$ be the (unique) submodule of $N_{v}$ isomoprhic to $(\zz / 2^{m(v) - 1}\zz)^{2}$.  Otherwise, $N_{v}$ is of rank $3$.  In this case, $N_{v}$ must contain an element of order $2^{m(v)}$; otherwise $\langle x, y \rangle = 0$ for all $x \in J[2]$ and all $y \in N_{v}$, and by maximality, $N_{v} \supset J[2]$.  Let $y$ be an element in $N_{v}$ of order $2^{m(v)}$.  Then $N_{v} \cong y \cdot \zz / 2^{m(v)}\zz \oplus M$, where $M$ is a submodule of rank $2$ and order $2^{m(v)}$.  Let $x$ be an element of maximal order in $M$, and let $N'$ be the submodule of $N_{v}$ generated by $2x$ and $2y$.  In either case $N' < J[2^{m(v) - 1}]$, and Lemma \ref{prop: Weil pairing under isogeny} implies that $N'$ is Weil isotropic in $J[2^{m(v) - 1}]$.  Moreover, in either case, $N'$ has order $2^{2(m(v) - 1)}$, so Lemma \ref{prop: size of maximal isotropic subgroup} implies that $N'$ is a maximal Weil isotropic subgroup of $J[2^{m(v) - 1}]$ and represents a vertex $v' \in |S|$.  Since in either case, $N' < N_{v}$ and $N_{v} / N' \cong \zz / 2\zz \oplus \zz / 2\zz$, $v'$ is connected to $v$ by an edge.  Then by the inductive assumption, there is a path from $v'$ to $v_{0}$, and so there is also a path from $v$ to $v_{0}$.  Thus, $S$ is connected.

We now prove that $S$ is $15$-regular.  First of all, Lemma \ref{prop: 15 subgroups} implies that there are exactly $15$ maximal Weil isotropic subgroups of $J[2]$, which implies that there are exactly $15$ vertices adjacent to $v_{0}$.  Now let $v$ be any vertex different from $v_{0}$, and let $N_{v}$ be the corresponding maximal Weil isotropic subgroup of $J[2^{m(v)}]$ not containing $J[2]$.  Lemmas \ref{prop: size of maximal isotropic subgroup} and \ref{prop: Weil pairing under isogeny} imply that a subgroup $N' < J[2^{m(v) + 1}]$ containing $N_{v}$ is maximal Weil isotropic if and only if $N' / N_{v}$ is a maximal Weil isotropic subgroup of $(J / N_{v}) [2]$.  Since Lemma \ref{prop: 15 subgroups} implies that there are exactly $15$ maximal Weil isotropic subgroups of $(J / N_{v}) [2]$, it follows that there are exactly $15$ maximal Weil isotropic subgroups of $J[2^{m(v) + 1}]$ which contain $N_{v}$.

It will now suffice to produce a bijection between the vertices adjacent to $v$ and the $15$ maximal Weil isotropic subgroups of $J[2^{m(v) + 1}]$ containing $N_{v}$.  Let $\Lambda_{v}$ be the free rank-$4$ $\zz_{2}$-lattice in $V_{2}(J)$ corresponding to $v$ via Proposition \ref{prop: description of L}(a).  Every vertex of $S$ is represented uniquely by a lattice which contains $\Lambda_{v}$ but not $\frac{1}{2}\Lambda_{v}$, by an argument similar to that used in the proof of Proposition \ref{prop: description of L}(a).  Suppose that $v'$ is a vertex adjacent to $v$, and that $\Lambda'$ and $\Lambda$ are lattices representing $v'$ and $v$ respectively such that one lattice is contained in the other with quotient isomorphic to $\zz / 2\zz \oplus \zz / 2\zz$.  Then, after possibly replacing $\Lambda'$ with $\frac{1}{2}\Lambda'$ and then possibly multiplying both lattices by a suitable scalar, one can assume that $\Lambda = \Lambda_{v}$ and $\Lambda' > \Lambda_{v}$ with $\Lambda' / \Lambda_{v} \cong \zz / 2\zz \oplus \zz / 2\zz$.  The latter condition implies that $\Lambda'$ does not contain $\frac{1}{2}\Lambda_{v}$.  Thus, each vertex $v'$ adjacent to $v$ is uniquely represented by a lattice $\Lambda' > \Lambda_{v}$ with $\Lambda' / \Lambda \cong \zz / 2\zz \oplus \zz / 2\zz$.  Now as in the proof of Proposition \ref{prop: description of L}(b), $N_{v}$ is the subgroup of $J[2^{\infty}]$ not containing $J[2]$ corresponding to $\Lambda_{v}$, and $N_{v} \cong \Lambda_{v} / \Lambda_{0}$.  For any such lattice $\Lambda'$, clearly $2^{m(v) + 1}\Lambda' < \Lambda_{0}$, so we may identify $\Lambda' / \Lambda_{0} \cong 2^{m(v) + 1}\Lambda' / 2^{m(v) + 1}\Lambda_{0}$ with a subgroup of $\Lambda_{0} / 2^{m(v) + 1}\Lambda_{0}$, which is naturally identified with $J[2^{m(v) + 1}]$.  Thus, we may identify $N' := \Lambda' / \Lambda_{0}$ with a subgroup of $J[2^{m(v) + 1}]$.  Let $m(v')$ be the minimal positive integer such that $2^{m(v')}$ kills $\Lambda' / \Lambda_{0}$; clearly, $m(v') \leq m(v) + 1$.  Then $N_{v'} := 2^{m(v')}\Lambda' / 2^{m(v')}\Lambda_{0} < \Lambda_{0} / 2^{m(v')}\Lambda_{0}$ is the subgroup of $J[2^{m(v')}]$ associated to $\Lambda'$ in the proof of Proposition \ref{prop: description of L}(b).  By the definition of $S$, $N_{v'}$ is a maximal Weil isotropic subgroup of $J[2^{m(v')}]$.  But clearly $N_{v'} = 2^{m(v) + 1 - m(v')}N'$, so $N'$ is Weil isotropic.  By checking the order of $N'$ and applying Lemma \ref{prop: size of maximal isotropic subgroup}, we verify that $N'$ is a maximal Weil isotropic subgroup of $J[2^{m(v) + 1}]$; moreover, $\Lambda' > \Lambda_{v}$ implies $N' > N_{v}$.  

Now assume conversely that $N'$ is a maximal Weil isotropic subgroup of $J[2^{m(v) + 1}]$.  Then, by the construction used in the proof of Proposition \ref{prop: size of maximal isotropic subgroup}(b), $N'$ corresponds to a lattice $\Lambda'$ containing $\Lambda_{0}$ but not $\frac{1}{2}\Lambda_{0}$, and $\Lambda' > \Lambda_{v}$ with $\Lambda' / \Lambda_{v} \cong N' / N_{v}$.  Since $N'$ has order $2^{2(m(v) + 1)}$ and $N_{v}$ has order $2^{2m(v)}$, the quotient $N' / N_{v}$ has order $4$.  As was shown above, $N_{v}$ has an element of order $2^{m(v)}$, so the elements of $N' / N_{v}$ must all have order dividing $2$.  It follows that $\Lambda' / \Lambda_{v} \cong N' / N_{v} \cong \zz / 2\zz \oplus \zz / 2\zz$.  Then by the definition of $S$, the vertex $v' \in |S|$ represented by $\Lambda'$ is adjacent to $v$, and we are done.

\end{proof}

Instead of working with $S$, we want to work with a graph that has nicer properties (for instance, $S$ is not simply connected).  Since $S$ is connected and $15$-regular with basepoint $v_{0}$, it has a universal covering graph which is a $15$-regular tree, and we may identify it with $T$ from \S1.  Thus, each vertex $w \in |T|$ corresponds to a non-backtracking path in $S$ beginning at $v_{0}$, which we write as a sequence of vertices $\{v_{0}, v_{1}, ... , v_{n}\}$ with each $v_{i} \in |S|$ and $v_{i - 1}$ and $v_{i}$ adjacent for $1 \leq i \leq n$.  We designate $w_{0} := \{v_{0}\}$ as the root of the tree $T$.

\begin{prop}\label{prop: proporties of w mapsto N_{w}}

For any $v \in |S|$, let $N_{v}$ be the maximal Weil isotropic subgroup of $J[2^{m}]$ for some $m$ not containing $J[2]$ which uniquely corresponds to $v$ as in the definition of $S$.  For any $w = \{v_{0}, ... , v_{n}\} \in |T|$, let $m(w)$ be the unique integer such that $N_{v_{n}} < J[2^{m(w)}]$ is a maximal Weil isotropic subgroup.  Then,

a) $m(v) \leq n$ and $n - m(v)$ is even.

Define $N_{w} = 2^{(m(v) - n)/2}N_{v_{n}}$.  Then the assignment $w \mapsto N_{w}$ has the following properties.

b) If $w \in |T|_{n}$, then $N_{w}$ is a maximal Weil isotropic subgroup of $J[2^{n}]$; and 

c) If $w \in |T|_{n}$ for $n \geq 1$, and $\tilde{w} \in |T|_{n - 1}$ is its parent vertex, then $N_{w} > N_{\tilde{w}}$ and $N_{w} / N_{\tilde{w}} \cong \zz / 2\zz \oplus \zz / 2\zz$.

\end{prop}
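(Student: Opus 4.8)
The plan is to prove the three statements in sequence, using the explicit correspondence between vertices of $S$ and lattices/subgroups established in Propositions \ref{prop: description of L} and \ref{prop: S is 15-regular}. For part (a), I would argue by induction on $n$. The base case $n = 0$ is trivial since $v_0 = \{v_0\}$ corresponds to $N_{v_0} = \{e_\varnothing\} < J[2^0]$, so $m(v_0) = 0$. For the inductive step, suppose $w = \{v_0, \dots, v_n\} \in |T|_n$ with parent $\tilde{w} = \{v_0, \dots, v_{n-1}\}$, so by induction $m(v_{n-1}) \leq n-1$ and $n - 1 - m(v_{n-1})$ is even. Since $v_{n-1}$ and $v_n$ are adjacent in $S$, by definition of the edge relation there are lattices $\Lambda, \Lambda'$ representing them with one contained in the other and quotient $\cong \zz/2\zz \oplus \zz/2\zz$; following the normalization argument in the proof of Proposition \ref{prop: S is 15-regular}, I can track how $m$ changes. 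The key observation is that an edge either increases $m$ by $1$ (when we pass from $N_v$ to a maximal Weil isotropic overgroup in the next level) or decreases $m$ by $1$ (when the new lattice, after renormalizing to contain $\Lambda_0$, has smaller $2$-power exponent). In both cases $m(v_n) \equiv m(v_{n-1}) + 1 \pmod 2$, which together with the inductive hypothesis gives $n - m(v_n)$ even, and $m(v_n) \leq m(v_{n-1}) + 1 \leq n$.

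For part (b), once part (a) is known, the definition $N_w := 2^{(m(v_n) - n)/2} N_{v_n}$ makes sense because $(m(v_n) - n)/2$ is a non-positive integer by part (a), so we are multiplying by a non-negative power of $2^{-1}$; equivalently, $N_w$ is the preimage of $N_{v_n}$ under multiplication by $2^{(n - m(v_n))/2}$ inside $J[2^n]$. I need to check three things: that $N_w \subset J[2^n]$, that $N_w$ is Weil isotropic, and that it is maximal. The containment $N_w < J[2^n]$ is immediate since $2^{(n - m(v_n))/2} N_w = N_{v_n} < J[2^{m(v_n)}]$ forces $2^n N_w = 0$. For Weil isotropy, I would invoke Lemma \ref{prop: Weil pairing under isogeny} iterated: the Weil pairing on $J[2^n]$ restricted to the $2^{(n-m)/2}$-divisible part pulls back to a power of the Weil pairing on $J[2^m]$, on which $N_{v_n}$ is isotropic by definition; more carefully, for $P, Q \in N_w$ we have $\bar{e}_2^{(n)}(P,Q)^{2^{n - m(v_n)}} = $ (something involving the pairing of $2^{(n-m)/2}P$ and $2^{(n-m)/2}Q$), but since $2^{(n-m)/2}P, 2^{(n-m)/2}Q \in N_{v_n}$ and $N_{v_n}$ is isotropic in $J[2^{m}]$, and one must be slightly careful about which power of the pairing appears — this is where the precise statement of Lemma \ref{prop: Weil pairing under isogeny} relating $e_2'(\phi P, \phi Q) = e_2(P,Q)^2$ under a Richelot-type isogeny is used. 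Alternatively, and more cleanly, isotropy of $N_w$ follows directly from isotropy of $N_{v_n}$ and the fact that the pairing $\bar e_2^{(n)}$ on $J[2^n]$ restricted to $2^j$-torsion and composed with the $2^j$-multiplication map is compatible with $\bar e_2^{(n-j)}$. Maximality then follows from Lemma \ref{prop: size of maximal isotropic subgroup}: $|N_w| = |N_{v_n}| \cdot 2^{2(n - m(v_n))/2 \cdot ?}$ — I would recompute the order, noting $N_w / N_{v_n}$-type considerations give $|N_w| = 2^{2n}$ since $N_{v_n}$ has order $2^{2m(v_n)}$ by Lemma \ref{prop: size of maximal isotropic subgroup} and multiplication by $2^{-(n-m(v_n))/2}$ multiplies the order appropriately; then $|N_w| = 2^{2n} = |\ff_2|^n \cdot$ matches the maximal isotropic cardinality in the free $\zz/2^n\zz$-module of rank $4$, so maximality follows.

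For part (c), I would unwind the definitions of $N_w$ and $N_{\tilde w}$ in terms of the corresponding lattices $\Lambda_w$ and $\Lambda_{\tilde w}$. By Proposition \ref{prop: S is 15-regular} (specifically the bijection established in its proof between vertices adjacent to $\tilde w$'s last vertex and maximal Weil isotropic overgroups), the edge between $\tilde w$ and $w$ in $T$, which projects to an edge between $v_{n-1}$ and $v_n$ in $S$, corresponds to a containment of normalized lattices with quotient $\zz/2\zz \oplus \zz/2\zz$; the scaling factors $2^{(m(v_{n-1}) - (n-1))/2}$ and $2^{(m(v_n)-n)/2}$ differ by exactly the right amount to make $N_w$ and $N_{\tilde w}$ sit inside $J[2^n]$ and $J[2^{n-1}] \subset J[2^n]$ respectively with $N_w \supset N_{\tilde w}$. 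The index computation: $|N_w| / |N_{\tilde w}| = 2^{2n} / 2^{2(n-1)} = 4$, and since $N_{\tilde w} \subset J[2^{n-1}]$ has elements of order up to $2^{n-1}$ while $N_w / N_{\tilde w}$ must be killed by $2$ (because $J[2^n]/J[2^{n-1}]$-style constraints force the quotient to be $2$-torsion once we know $N_{\tilde w}$ contains an element of full order $2^{n-1}$, exactly as in the final paragraph of the proof of Proposition \ref{prop: S is 15-regular}), we conclude $N_w / N_{\tilde w} \cong \zz/2\zz \oplus \zz/2\zz$.

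The main obstacle I anticipate is part (a), and specifically bookkeeping the $\pm 1$ change in $m$ along an edge of $S$: an edge in $S$ between $[\Lambda]$ and $[\Lambda']$ with $\Lambda' / \Lambda \cong \zz/2\zz \oplus \zz/2\zz$ only specifies the relation between the two lattices up to simultaneous scaling, so one has to carefully choose the representatives that contain $\Lambda_0$ but not $\tfrac12\Lambda_0$ and verify that the associated exponents $m$ differ by exactly $1$ in parity — the subtlety being that a priori the quotient $\Lambda'/\Lambda_0$ (after renormalizing $\Lambda'$) could jump in exponent by more than one, and ruling this out requires the structural analysis of maximal isotropic subgroups (rank $2$ or $3$, presence of an element of maximal order) carried out in the proof of Proposition \ref{prop: S is 15-regular}. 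Once the parity and inequality in (a) are secured, parts (b) and (c) are essentially order computations plus invocations of Lemmas \ref{prop: size of maximal isotropic subgroup} and \ref{prop: Weil pairing under isogeny}.
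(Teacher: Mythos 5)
Your proposal takes essentially the same route as the paper: induction on $n$ with the $\pm 1$ change in $m$ along an edge (read off from the proof of Proposition \ref{prop: S is 15-regular}) giving part (a), compatibility of Weil pairings together with the order computation $|N_w| = 2^{2n}$ and Lemma \ref{prop: size of maximal isotropic subgroup} giving part (b), and the lattice/subgroup bookkeeping for $N_{v_n}$ versus $N_{v_{n-1}}$ giving part (c). The one step you flagged as delicate — ``which power of the pairing appears'' when passing from $\bar e_2^{(n)}$ on $N_w$ to $\bar e_2^{(m(w))}$ on $N_{v_n}$ — is precisely the step the paper also treats tersely, so your reconstruction matches the published argument rather than offering an alternative.
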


\begin{proof}

If $w \in |T|_{1}$, then $w = \{v_{0}, v_{1}\}$ and $N_{v_{1}}$ is a maximal Weil isotropic subgroup of $J[2]$, so $m(w) = 1$ and parts (a) and (c) are clear.  Thus, all the claims are proven for $n = 1$ (note that (a) is trivially true for $n = 0$).  Choose $n \geq 2$, and assume inductively that all the claims are true for $n - 1$.  Choose $w = \{v_{0}, v_{1}, ... , v_{n}\} \in |T|_{n}$; then we may apply the inductive assumptions to $\tilde{w} = \{v_{0}, v_{1}, ... , v_{n - 1}\} \in |T|_{n - 1}$.

It was shown in the proof of Proposition \ref{prop: S is 15-regular} that $m(w) = m(\tilde{w}) + 1$ or $m(w) = m(\tilde{w}) - 1$.  In the first case, $n - m(w) = (n - 1) - m(\tilde{w})$, which is nonnegative and even, so $n - m(w)$ is nonnegative and even, which is the claim of part (a).  Moreover, as in the proof of Proposition \ref{prop: S is 15-regular}, $N_{v_{n}} > N_{v_{n - 1}}$ with $N_{v_{n}} / N_{v_{n - 1}} \cong \zz / 2\zz \oplus \zz / 2\zz$.  But $N_{w} = 2^{(m(w) - n)/2}N_{v_{n}}$ and $N_{\tilde{w}} = 2^{(m(w) - n)/2}N_{v_{n - 1}}$, and it follows immediately that $N_{w} > N_{\tilde{w}}$ with $N_{w} / N_{\tilde{w}} \cong \zz / 2\zz \oplus \zz / 2\zz$, which is the claim of part (c).  Now in the second case that $m(w) = m(\tilde{w}) - 1$, $n - m(w) = (n - 1) - m(\tilde{w}) + 2$.  Since $(n - 1) - m(\tilde{w})$ is nonnegative and even, so is $n - m(w)$, which is the claim of part (a).  Moreover, as in the proof of Proposition \ref{prop: S is 15-regular}, $N_{v_{n}} < N_{v_{n - 1}}$ and $N_{v_{n - 1}} / N_{v_{n}} \cong \zz / 2\zz \oplus \zz / 2\zz$.  Then $N_{v_{n}} > 2N_{v_{n - 1}}$ with $N_{v_{n}} / 2N_{v_{n - 1}} \cong \zz / 2\zz \oplus \zz / 2\zz$.  But $N_{w} = 2^{(m(w) - n)/2}N_{v_{n}}$ and $N_{\tilde{w}} = 2^{(m(w) - n)/2} \cdot 2N_{v_{n - 1}}$, and it follows immediately again that $N_{w} > N_{\tilde{w}}$ with $N_{w} / N_{\tilde{w}} \cong \zz / 2\zz \oplus \zz / 2\zz$, which is the claim of part (c).  Thus, (a) and (c) are proven.

Let $P, Q \in N_{w}$.  Then $2^{n - m(w)}P, 2^{n - m(w)}Q \in N_{v_{n}}$ and thus, $\bar{e}_{2}^{(n)}(P, Q) = \bar{e}_{2}^{(m(w))}(2^{n - m(w)}P, 2^{n - m(w)}Q) = 1$.  Thus, $N_{w}$ is a Weil isotropic subgroup of $J[2^{n}]$.  Since $N_{v_{n}} < J[2^{m(w)}]$ is maximal Weil isotropic, by Lemma \ref{prop: size of maximal isotropic subgroup}, $|N_{v_{n}}| = 2^{2m(w)}$.  Then 
\begin{equation}\label{proving properties of w mapsto N_{w}}
|N_{w}| = |N_{v_{n}}| \cdot |J[2^{n - m(w)/2}]| = 2^{2m(w)} \cdot 2^{2(n - m(w))} = 2^{2n}.
\end{equation}
Now Lemma \ref{prop: size of maximal isotropic subgroup} implies that $N_{w}$ is maximal Weil isotropic in $J[2^{n}]$, thus proving part (b).

\end{proof}

\begin{rmk}

Note that for general $w, w' \in |T|$, $N_{w}$ may contain $J[2]$, and that $N_{w} = N_{w'}$ does not imply that $w = w'$.

\end{rmk}

\section{Proof of main result}

The goal of this section is to prove Theorem \ref{prop: main theorem genus 2}.  We retain the notations of previous sections.  In particular, $C$ is a smooth, projective model of the affine hyperelliptic curve given by (\ref{hyperelliptic model genus 2}), which is defined over $K$, and $J$ is its Jacobian.  We also define the tree $T$ as in \S3, and for each $w \in |T|_{n}$, we define the maximal Weil isotropic subgroup $N_{w} < J[2^{n}]$ as in Proposition \ref{prop: proporties of w mapsto N_{w}}.

We now assign to each $w \in |T|_{n}$ a Jacobian surface $J_{w}$ and an isogeny $\phi_{w} : J \to J_{w}$ whose kernel is $N_{v_{n}}$, which we will later show is defined over $K(N_{v_{n}})$.

Set $J_{w_{0}} := J$, and let $\phi_{w_{0}} : J \to J_{w_{0}}$ be the identity isogeny.

For any $w \in |T|_{1}$, let $C_{w}$ (resp. $J_{w}$) be the Richelot isogenous curve (resp. Jacobian) corresponding to $N_{w}$ as in Theorem \ref{prop: Richelot isogeny}, and let $\phi_{w} : J \to J_{w}$ be the corresponding Richelot isogeny.  Note that, as in \S3, the maximal Weil isotropic subgroup $N_{w} < J[2]$ determines a permutation equivalence class $R_{w} \in \bar{\mathcal{R}}$ with $|R_{w}| = \{\alpha_{i}\}_{i = 1}^{5} \cup \{\infty\}$, and that $|\Ri(R_{w})|$ is the set of $x$-coordinates of branch points of $C_{w}$.  Moreover, $\phi_{w}$ and $J_{w}$ are defined over $K(R_{w})$.

Now choose $w \in |T|_{n}$ for some $n \geq 1$, and assume inductively that a curve $C_{w}$ whose Jacobian is $J_{w}$, as well as an isogeny $\phi_{w} : J \to J_{w}$ whose kernel is $N_{w}$, have been defined.  Assume further that we have defined a curve $C_{\tilde{w}}$ whose Jacobian is $J_{\tilde{w}}$ and an isogeny $\phi_{\tilde{w}} : J \to J_{\tilde{w}}$ whose kernel is $N_{\tilde{w}}$.  Moreover, assume that we have assigned an element $R_{w} \in \mathcal{R}$ such that $|R_{w}|$ is the set of $x$-coordinates of branch points of $C_{\tilde{w}}$.  Let $u \in |T|$ with $w = \tilde{u}$.  Then there is a corresponding maximal Weil isotropic subgroup $N_{u} < J[2^{n + 1}]$ containing $N_{w}$.  So $\phi_{w}(N_{u})$ is a maximal Weil isotropic subgroup of $J_{w}[2]$, which again determines a permutation equivalence class $R_{u} \in \bar{\mathcal{R}}$ such that $|R_{u}|$ is the set of $x$-coordinates of branch points of $C_{w}$.  Thus, $|R_{u}| = |\Ri(R_{w})|$.  Now let $C_{u}$ (resp. $J_{u}$) be the Richelot isogenous curve (resp. Jacobian) and $\psi_{u} : J_{w} \to J_{u}$ be the Richelot isogeny associated to the maximal Weil isotropic subgroup $\phi_{w}(N_{u}) < J_{w}[2]$ as in Theorem \ref{prop: Richelot isogeny}.  Then $\phi_{u} := \psi_{u} \circ \phi_{w} : J \to J_{u}$ is an isogeny whose kernel is $N_{u}$.  Since the parent of every vertex in $|T|_{n + 1}$ is a vertex in $|T|_{n}$, it follows that through the method described above, we have defined the desired $C_{w}$, $J_{w}$ and $\phi_{w}$ for all $w \in |T|_{n + 1}$.

In this way, $C_{w}$, $J_{w}$, $\phi_{w}$, and $R_{w} \in \mathcal{R}$ are defined for all $w \in |T|_{\geq 1}$.  Furthermore, for all $w \in |T|$, we define $K_{w}$ to be the extension of $K$ obtained by adjoining the coefficients of the Weierstrass equation of $C_{w}$ given above.

\begin{lemma}\label{prop: Psi is a decoration genus 2}

Using the above notation, define $\Psi : |T|_{\geq 1} \to \bar{\mathcal{R}}$ by setting $\Psi(w) = R_{w}$ for $w \in |T|_{\geq 1}$.  Then $\Psi$ is a decoration on $T$.

\end{lemma}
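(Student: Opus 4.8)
The plan is to verify the three conditions (a), (b), (c) of Definition~\ref{def: decoration genus 2} for $\Psi(w) = R_w$; most of these are already packaged into the inductive construction preceding the lemma. As a standing preliminary I would observe that every curve $C_w$ is a genuine genus-$2$ curve and that the correspondence of \S2 (between maximal Weil isotropic subgroups of $J_w[2]$ and partitions of the branch locus $B_w$ of $C_w$ into three pairwise disjoint $2$-element subsets of $\pp^1_{\bar K}$) applies at each vertex: $C_{w_0} = C$ has squarefree defining quintic with associated matrix $G$ having $\det(G) \neq 0$, and by Proposition~\ref{prop: existence of decoration genus 2}(b) together with the identity $\det(H) = 2\det(G)^2$, each successive Richelot-isogenous polynomial is again squarefree of degree $5$ or $6$ with nonzero determinant. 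Condition (b) is then immediate: for $w \in |T|_1$ the class $R_w$ is, by construction, the one attached to $N_w < J[2]$ via the \S2 correspondence for $C_{w_0}$, so $|R_w| = B_{w_0} = \{\alpha_i\}_{i=1}^5 \cup \{\infty\}$. The first half of (c) is equally immediate: for $u \in |T|_{\geq 2}$ with parent $w = \tilde u$, the class $R_u$ is the one attached to $\phi_w(N_u) < J_w[2]$, so $|R_u| = B_w = |\Ri(R_w)| = |\Ri(\Psi(\tilde u))|$.

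The content of the lemma lies in (a) and the second half of (c), and I would derive both from one claim: \emph{for every $w \in |T|_n$, the map $u \mapsto \phi_w(N_u)$ is injective on the set of children of $w$ in $T$, and no value $\phi_w(N_u)$ equals $\psi_w(J_{\tilde w}[2])$} (here, for $w \in |T|_1$ one reads $\psi_w := \phi_w$ and $J_{\tilde w} := J$, and for $w = w_0$ the second assertion is vacuous). Granting the claim: injectivity of $u \mapsto \phi_w(N_u)$, combined with the bijectivity of the \S2 correspondence for $C_w$, yields injectivity of $u \mapsto R_u$ on siblings, which is (a) (both at $w = w_0$ and at all other $w$); and by Theorem~\ref{prop: Richelot isogeny}(b), applied to the Richelot isogeny $\psi_w : J_{\tilde w} \to J_w$ whose kernel $\phi_{\tilde w}(N_w)$ corresponds to $R_w$, the subgroup $\psi_w(J_{\tilde w}[2])$ is exactly the maximal Weil isotropic subgroup of $J_w[2]$ corresponding to $\Ri(R_w)$; hence the second assertion of the claim forces $R_u \neq \Ri(R_w) = \Ri(\Psi(\tilde u))$ for every child $u$, which is the second half of (c).

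To prove the claim I would pass to the lattice language of \S3. Let $\Lambda_{\tilde w} \subseteq \Lambda_w$ be lattices with $\Lambda_{\tilde w}/\Lambda_0 \cong N_{\tilde w}$ and $\Lambda_w/\Lambda_0 \cong N_w$; by Proposition~\ref{prop: proporties of w mapsto N_{w}}(c), $\Lambda_0 \subseteq \Lambda_{\tilde w} \subseteq \Lambda_w \subseteq \frac12\Lambda_{\tilde w}$ with $\Lambda_w/\Lambda_{\tilde w} \cong \zz/2\zz \oplus \zz/2\zz$, and $[\Lambda_{\tilde w}]$, $[\Lambda_w]$ are the vertices $v_{[\tilde w]}$, $v_{[w]}$ of $S$ underlying $\tilde w$, $w$. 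The children of $w$ are the non-backtracking one-step extensions of the path underlying $w$, i.e., the neighbors $v_{[u]}$ of $v_{[w]}$ in $S$ other than $v_{[\tilde w]}$; and by the uniqueness statement in the proof of Proposition~\ref{prop: S is 15-regular}, the subgroup $N_u$ attached to such a child is $\Lambda_u/\Lambda_0$ for the unique lattice $\Lambda_u \supsetneq \Lambda_w$ with $\Lambda_u/\Lambda_w \cong \zz/2\zz \oplus \zz/2\zz$ in the class $v_{[u]}$. Distinct children therefore give distinct $v_{[u]}$, hence distinct $\Lambda_u$, hence distinct $N_u$ and distinct $\phi_w(N_u)$, which is injectivity. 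For the second assertion, writing $\phi_w = \psi_w \circ \phi_{\tilde w}$ with $\ker\phi_{\tilde w} = N_{\tilde w}$, $\ker\psi_w = \phi_{\tilde w}(N_w) \subseteq J_{\tilde w}[2]$, and $\phi_{\tilde w}\big([2]^{-1}(N_{\tilde w})\big) = J_{\tilde w}[2]$, one computes that $\phi_w(N_u) = \psi_w(J_{\tilde w}[2])$ would force $N_u + N_w = [2]^{-1}(N_{\tilde w}) + N_w$, i.e. (as $N_w \subseteq N_u$ and $2N_w \subseteq N_{\tilde w}$) $N_u = [2]^{-1}(N_{\tilde w})$; but $[2]^{-1}(N_{\tilde w})$ corresponds to the lattice $\frac12\Lambda_{\tilde w}$, which lies in the class $[\Lambda_{\tilde w}] = v_{[\tilde w]}$, whereas $N_u$ lies in the class $v_{[u]} \neq v_{[\tilde w]}$. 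Hence $\phi_w(N_u) \neq \psi_w(J_{\tilde w}[2])$ for every child $u$, which completes the proof of the claim.

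The step I expect to be the main obstacle is the bookkeeping in the last paragraph: carrying the identifications of Propositions~\ref{prop: description of L}, \ref{prop: S is 15-regular}, and~\ref{prop: proporties of w mapsto N_{w}} through faithfully, so that the purely combinatorial description of the children of $w$ (non-backtracking extensions of a path in $S$, with the $m(v) \mapsto m(v) \pm 1$ case split) is matched correctly with the maximal Weil isotropic subgroups of $J_w[2]$ used in building $\phi_u$, and in particular keeping straight the distinction between the ``$S$-version'' and the ``$T$-version'' of the subgroup attached to a vertex (these differ by a $2$-power scaling but represent the same lattice class), so that one may correctly recognize $[2]^{-1}(N_{\tilde w})$ as the subgroup lying over the parent vertex $v_{[\tilde w]}$. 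Everything else is formal.
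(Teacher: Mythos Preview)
Your proof is correct and follows essentially the same approach as the paper: both arguments verify (b) and the first half of (c) directly from the construction, deduce (a) from the fact that distinct children $u,u'$ of $w$ give distinct subgroups $\phi_w(N_u)\neq\phi_w(N_{u'})$ of $J_w[2]$, and for the second half of (c) invoke Theorem~\ref{prop: Richelot isogeny}(b) to see that $R_u=\Ri(R_w)$ would force $N_u=[2]^{-1}(N_{\tilde w})$ (equivalently $2N_u=N_{\tilde w}$), whence the terminal $S$-vertex of $u$ coincides with that of $\tilde w$, contradicting non-backtracking. Your version makes the lattice bookkeeping more explicit than the paper does (the paper simply asserts that $\phi_{\tilde w}(N_w)$ and $\phi_{\tilde w}(N_{w'})$ are distinct and that $N_{\tilde w}=2N_u$ implies $v_{n+1}=v_{n-1}$), but the logical skeleton is identical.
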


\begin{proof}

We have to show that the conditions in Definition \ref{def: decoration genus 2} are fulfilled.  First, let $w, w' \in |T|_{n}$ for some $n \geq 1$, and assume they have the same parent vertex.  So there are maximal Weil isotropic subgroups $N_{w}, N_{w'} < J[2^{n}]$, $N_{\tilde{w}} < J[2^{n - 1}]$, such that $N_{w}, N_{w'} > N_{\tilde{w}}$.  Then $\phi_{\tilde{w}}(N_{w})$ and $\phi_{\tilde{w}}(N_{w'})$ are distinct maximal Weil isotropic subgroups of $J_{\tilde{w}}[2]$, and it follows that the associated elements $\Psi(w) = R_{w}, \Psi(w') = R_{w'} \in \bar{\mathcal{R}}$ must be distinct, thus satisfying part (a).

It is clear from the construction of $R_{w}$ for $w \in |T|_{1}$ that part (b) is satisfied.

Finally, let $u = \{v_{0}, ... , v_{n - 1}, v_{n}, v_{n + 1}\} \in |T|_{n + 1}$ and $w = \tilde{u} = \{v_{0}, ... , v_{n - 1}, v_{n}\} \in |T|_{n}$.  Then by the above construction, $\phi_{w} = \psi_{w} \circ \phi_{\tilde{w}}$, where $\psi_{w} : J_{\tilde{w}} \to J_{w}$ is the Richelot isogeny associated to the maximal Weil isotropic subgroup $\phi_{\tilde{w}}(N_{w}) < J_{w}[2]$.  Now suppose that $\Psi(u) = R_{u}$ coincides with $\Ri(\Psi(w)) = \Ri(R_{w})$.  Then Theorem \ref{prop: Richelot isogeny}(b) says that $\phi_{w}(N_{u})$ is the image of $J_{\tilde{w}}[2]$ under $\psi_{w}$.  It follows that $N_{\tilde{w}} = 2N_{u}$.  Then by the construction in Proposition \ref{prop: proporties of w mapsto N_{w}}, $N_{v_{n - 1}} = N_{v_{n + 1}}$, so $v_{n + 1} = v_{n - 1}$, which contradicts the fact that the path $u = \{v_{0}, ... , v_{n - 1}, v_{n}, v_{n + 1}\} \in |T|$ is non-backtracking.  Thus, $\Psi(u) \neq \Ri(\Psi(w))$, and part (c) is satisfied.

\end{proof}

\begin{dfn}

For any integer $n \geq 0$, define the extension $K_{n}'$ of $K$ to be the compositum of the fields $K_{w}$ for all $w \in |T|_{\geq 1}$.  Define the extension $K_{\infty}'$ of $\bar{K}$ to be the infinite compositum 
$$K_{\infty}' := \bigcup_{n \geq 0} K_{n}'.$$
\end{dfn}

In this way, we obtain a tower of field extensions
\begin{equation}\label{tower of fields genus 2} K = K_{0}' \subset K_{1}' \subset K_{2}' \subset ... \subset K_{n}' \subset ..., \end{equation}
with $K_{\infty}' = \bigcup_{n \geq 0} K_{n}'$.

\begin{lemma}\label{prop: field of definition genus 2}

For any $w \in |T|_{n}$, let $\{w_{0}, w_{1}, ... , w_{n}\}$ be the sequence of vertices in the path of length $n$ from $w_{0}$ to $w$.  Let $\tilde{K}_{w}$ denote the compositum of the fields $K_{w}$ for all $w \in \{w_{0}, w_{1}, ... , w_{n}\}$.  Then 
$$\tilde{K}_{w} = K(\{R_{w}\}_{w \in \{w_{1}, w_{2}, ... , w_{n}\}}).$$

\end{lemma}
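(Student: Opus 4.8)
The plan is to prove the equality of fields $\tilde{K}_w = K(\{R_{w_i}\}_{i=1}^n)$ by induction on $n$, the length of the path from the root $w_0$ to $w$. The base case $n=0$ is trivial: $\tilde{K}_{w_0} = K_{w_0} = K$ since $C_{w_0} = C$ is already defined over $K$ by \eqref{hyperelliptic model genus 2}. For the inductive step, suppose the claim holds for the parent $\tilde w = \{w_0, \ldots, w_{n-1}\}$, so that $\tilde{K}_{\tilde w} = K(\{R_{w_i}\}_{i=1}^{n-1})$. By definition, $\tilde{K}_w$ is the compositum $\tilde{K}_{\tilde w} \cdot K_w$, so it suffices to show that $\tilde{K}_{\tilde w} \cdot K_w = \tilde{K}_{\tilde w}(R_{w_n})$, i.e.\ that adjoining the coefficients of the Weierstrass equation of $C_w$ to $\tilde{K}_{\tilde w}$ is the same as adjoining the entries of the matrix $M$ associated to a representative of $R_{w_n}$.

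The key point is to unpack the explicit construction of $C_w$ from \S4. By construction, $C_w$ is the Richelot isogenous curve associated to the maximal Weil isotropic subgroup $\phi_{\tilde w}(N_w) < J_{\tilde w}[2]$, so by Theorem \ref{prop: Richelot isogeny} its Weierstrass equation is $y^2 = \det(G)^{-1} D H_1 H_2 H_3$, where $(G_1, G_2, G_3)$ is a quadratic splitting of the defining polynomial of $C_{\tilde w}$ realizing the partition $R_{w_n}$, and $(H_1, H_2, H_3) = \Ri(G_1, G_2, G_3)$. Here $D \in \tilde{K}_{\tilde w}$ and the polynomial $f_{\tilde w}$ being split has coefficients in $\tilde{K}_{\tilde w}$ by the inductive hypothesis. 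I would first check the inclusion $K_w \subseteq \tilde{K}_{\tilde w}(R_{w_n})$: the entries of $G$ are determined by the $G_i$, which are the quadratic/linear factors $G_{i3}x^2 + G_{i2}x + G_{i1}$ whose roots comprise the pairs in (a representative of) $R_{w_n}$ — these coefficients are, up to the leading coefficient of $f_{\tilde w}$, exactly the entries of $M$ of that representative (compare the definitions of $M$ in \S1 and of $G$ in \S2), hence lie in $\tilde{K}_{\tilde w}(R_{w_n})$; then $\det(G)$, the $H_i = [G_{i+1}, G_{i+2}]$, and the product $H_1 H_2 H_3$ are all polynomial expressions in these, so the coefficients of the equation of $C_w$ lie in $\tilde{K}_{\tilde w}(R_{w_n})$. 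For the reverse inclusion $\tilde{K}_{\tilde w}(R_{w_n}) \subseteq \tilde{K}_{\tilde w} \cdot K_w$, I would recall from \S2 that $R_{w_n}$ corresponds to the maximal Weil isotropic subgroup $\phi_{\tilde w}(N_w) = \ker \psi_w$, and that $|R_{w_n}| = B_{\tilde w}$, the branch locus of $C_{\tilde w}$, which is already in $\tilde{K}_{\tilde w}$; the partition $R_{w_n}$ of this branch locus into three pairs is then recovered from the factorization pattern of $f_{\tilde w}$ dictated by the Richelot construction, so its associated $M$-entries are rational in $\tilde{K}_{\tilde w}$ together with the coefficients of $C_w$.

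The main obstacle I anticipate is keeping careful track of leading coefficients and the normalization built into the maps $M$ and $N$ of \S1 versus the quadratic splittings of \S2: the matrix $M([(R_1,R_2,R_3)])$ uses monic quadratics $x^2 - (\text{sum})x + (\text{product})$, whereas a quadratic splitting $f = G_1 G_2 G_3$ distributes the leading coefficient of $f$ among the $G_i$ arbitrarily. One must argue that this ambiguity does not affect the generated field — which is clear, since the ratios of leading coefficients are already in $\tilde{K}_{\tilde w}$ (indeed the leading coefficient of $f_{\tilde w}$ is) — but it requires a clean statement. I would phrase this as: the field $K(R)$ from \S1 equals the field generated over $K$ by the coefficients of \emph{any} quadratic splitting realizing $R$, modulo the field generated by the coefficients of the polynomial being split; this was essentially the content of the remark following the definition of $K(R)$, namely that $K(R)$ is the fixed field of the stabilizer of $R$ under $\Gal(\bar K/K)$. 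Invoking that Galois-theoretic characterization (applied over the base $\tilde{K}_{\tilde w}$ rather than $K$) gives the cleanest route and sidesteps the bookkeeping: $\tilde{K}_{\tilde w}(R_{w_n})$ is the fixed field of the stabilizer of $R_{w_n}$ in $\Gal(\bar K / \tilde{K}_{\tilde w})$, every element of that stabilizer fixes $C_w$ (as $C_w$ depends only on $R_{w_n}$ and on data already in $\tilde{K}_{\tilde w}$), giving one inclusion, while conversely any automorphism fixing $\tilde{K}_{\tilde w} \cdot K_w$ fixes $\phi_{\tilde w}(N_w) = \ker\psi_w$ hence fixes $R_{w_n}$, giving the other.
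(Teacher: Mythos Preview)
Your proposal is correct and follows essentially the same approach as the paper's proof: induction on $n$, reducing the inductive step to showing $\tilde{K}_{\tilde w}\cdot K_w = \tilde{K}_{\tilde w}(R_w)$, establishing one inclusion from the explicit form of the Richelot isogenous equation \eqref{Richelot isogenous curve}, and the reverse inclusion via the Galois-theoretic observation that an automorphism fixing $C_w$ (equivalently $J_w$ and $\psi_w$) over $\tilde K_{\tilde w}$ must stabilize the kernel $N=\phi_{\tilde w}(N_w)$ and hence fix $R_w$. Your discussion of the leading-coefficient normalization issue is more explicit than the paper's (which simply asserts the first inclusion ``from the form of \eqref{Richelot isogenous curve}''), but the Galois fixed-field argument you settle on is exactly the mechanism the paper uses.
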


\begin{proof}

This is trivial for $n = 0$.  Now assume inductively that the statement holds for some $n \geq 0$ and all $w \in |T|_{n}$.  Choose any $w \in |T|_{n + 1}$.  We may apply the inductive assumption to $\tilde{w}$, since $\tilde{w} \in |T|_{n}$.  One checks from the form of (\ref{Richelot isogenous curve}) that the curve $C_{w}$, and hence also its Jacobian $J_{w}$, are defined over $\tilde{K}_{\tilde{w}}(R_{w})$.

It will now suffice to show that any field over which $J_{w}$ is defined must contain $\tilde{K}_{\tilde{w}}(R_{w})$, and it will follow that $\tilde{K}_{w} = \tilde{K}_{\tilde{w}}(R_{w})$.  To do this, recall that $\phi_{w} : J \to J_{w}$ is the composition of $\phi_{\tilde{w}} : J \to J_{\tilde{w}}$ with a Richelot isogeny $\psi_{w} : J_{\tilde{w}} \to J_{w}$ whose kernel is the maximal Weil isotropic subgroup $N := \phi_{\tilde{w}}(N_{w}) < J_{\tilde{w}}[2]$.  Let $\sigma$ be an automorphism of the field $\tilde{K}_{\tilde{w}}(J_{\tilde{w}}[2])$ fixing $\tilde{K}_{\tilde{w}}$, and suppose that $\sigma$ fixes $\psi_{w}$ and $J_{w}$.  Since $\psi_{w}^{\sigma} : J_{\tilde{w}} \to J_{w}^{\sigma}$ is an isogeny whose kernel is $N^{\sigma}$, it follows that $\sigma$ stabilizes $N$ as well.  Therefore, any field over which $\psi_{w}$ and $J_{w}$ are defined must contain the subfield $L \subset \tilde{K}_{\tilde{w}}(J_{\tilde{w}}[2])$ fixed by all such automorphisms $\sigma$ which stabilize $N$.  Recall that the $4$ elements of $N$ are represented by divisors of the form $e_{\varnothing}, e_{R_{1}}, e_{R_{2}}, e_{R_{3}} \in \Div^{0}(J_{\tilde{w}})$, where $R_{w} = [(R_{1}, R_{2}, R_{3})]$.  Thus, $L$ is the field fixed by all automorphisms $\sigma$ which fix $(R_{1}, R_{2}, R_{3})$.  It is easy to check from the construction of $M : \mathcal{R} \to M_{3}(\bar{K})$ that this field is generated by the entries of $M(R_{w})$, and so $L = \tilde{K}_{\tilde{w}}(R_{w})$, as desired.  

\end{proof}

\begin{prop}\label{prop: characterization of fields' genus 2}

a) For any $n \geq 1$, $K_{n}' = K(\{\Psi(w)\}_{w \in |T|_{\leq n} \backslash \{w_{0}\}})$ for any decoration $\Psi$ on $T$.

b) As in the statement of Theorem \ref{prop: main theorem genus 2}, $K_{\infty}' = K(\{\Psi(w)\}_{w \in |T|_{\geq 1}})$ for any decoration $\Psi$ on $T$.

(In particular, the extensions $K(\{\Psi(w)\}_{w \in |T|_{\leq n} \backslash \{w_{0}\}})$ and $K(\{\Psi(w)\}_{w \in |T|_{\geq 1}})$ do not depend on the choice of decoration $\Psi$.)

\end{prop}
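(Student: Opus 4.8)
The plan is to prove this by induction on $n$, leveraging Lemma \ref{prop: field of definition genus 2} and the construction of the decoration $\Psi$ given in Lemma \ref{prop: Psi is a decoration genus 2}. For part (a), the key observation is that $K_n'$ was \emph{defined} as the compositum of the fields $K_w$ over all $w \in |T|_{\geq 1}$ with $w$ at distance $\leq n$ — so in particular $K_n' = \bigcup_{w \in |T|_{\leq n} \setminus \{w_0\}} \tilde{K}_w$, where $\tilde{K}_w$ is the compositum of $K_{w'}$ along the path from $w_0$ to $w$. Lemma \ref{prop: field of definition genus 2} identifies $\tilde{K}_w$ with $K(\{R_{w'}\}_{w' \text{ on the path to } w})$, and since $R_{w'} = \Psi(w')$ for the specific decoration $\Psi$ constructed in Lemma \ref{prop: Psi is a decoration genus 2}, taking the compositum over all $w$ at distance $\leq n$ gives exactly $K(\{\Psi(w)\}_{w \in |T|_{\leq n} \setminus \{w_0\}})$ for that particular $\Psi$.

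First I would establish the claim for the constructed decoration $\Psi$, then address the independence of the choice of decoration. The independence is where the real content lies: one must show that $K(\{\Psi(w)\}_{w})$ is the same field for any two decorations $\Psi, \Psi'$. For this I would argue inductively down the tree. At level $1$, the $15$ values $\Psi(w)$ for $w \in |T|_1$ range (by Proposition \ref{prop: existence of decoration genus 2}(a) together with Definition \ref{def: decoration genus 2}(a),(b)) over \emph{all} $15$ permutation classes $R \in \bar{\mathcal{R}}$ with $|R| = \{\alpha_i\}_{i=1}^5 \cup \{\infty\}$; hence $K(\{\Psi(w)\}_{w \in |T|_1})$ is the field generated by all of these, independent of $\Psi$ (the bijection $w \mapsto \Psi(w)$ onto the set of $15$ classes may differ, but the compositum of the corresponding fields does not). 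For the inductive step, fix a vertex $w$ at level $n$ whose value $\Psi(w) = R$ is already known to lie in the field generated by the lower levels; then by Definition \ref{def: decoration genus 2}(c) and Proposition \ref{prop: existence of decoration genus 2}(c), the $14$ values $\Psi(u)$ for the children $u$ of $w$ range over all $14$ classes $S \neq \mathrm{Ri}(R)$ with $|S| = |\mathrm{Ri}(R)|$ — and since $\mathrm{Ri}(R)$ itself is determined by $R$ (hence its field $K(\mathrm{Ri}(R))$ is already contained in what we have), the compositum of the fields $K(\Psi(u))$ over the children is again independent of the choice of decoration.

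Part (b) then follows immediately by taking the union over $n$: $K_\infty' = \bigcup_n K_n' = \bigcup_n K(\{\Psi(w)\}_{w \in |T|_{\leq n} \setminus \{w_0\}}) = K(\{\Psi(w)\}_{w \in |T|_{\geq 1}})$, and the independence statement passes to the limit.

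The main obstacle I anticipate is making the ``ranges over all $14$ classes'' argument fully rigorous at each internal node, i.e.\ carefully checking that knowing $\Psi(w)$ (and hence $K(\Psi(w)) \subseteq$ the field built from levels $\leq n$) really does force $K(\mathrm{Ri}(\Psi(w)))$ to be contained there as well — this uses that $\mathrm{Ri}$ is algebraic over the entries of $M(\Psi(w))$, visible from the explicit formula $M(R)^{\vee}$ in terms of the adjugate, so $K(\mathrm{Ri}(R)) \subseteq K(R)$. Once that containment is in hand, the combinatorial bijectivity from Proposition \ref{prop: existence of decoration genus 2}(c) does the rest. A secondary subtlety is bookkeeping: since two distinct tree vertices can have the same $N_w$ (as noted in the remark following Proposition \ref{prop: proporties of w mapsto N_{w}}), one should work with fields and composita rather than trying to match vertices across decorations, which is exactly why the argument is phrased in terms of the compositum being invariant rather than the individual values.
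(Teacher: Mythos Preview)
Your proposal is correct and follows essentially the same approach as the paper: both first reduce to the specific decoration $w \mapsto R_w$ via Lemma \ref{prop: field of definition genus 2}, then establish decoration-independence by a level-by-level induction using the combinatorial bijectivity in Definition \ref{def: decoration genus 2} and Proposition \ref{prop: existence of decoration genus 2}. The only difference is packaging: the paper explicitly constructs a distance-preserving permutation $\sigma$ of $|T|$ with $\Psi(w) = R_{\sigma(w)}$ for all $w$, whereas you argue directly that the multiset of values at each level (hence the compositum of the associated fields) is the same for any decoration; your observation that $K(\mathrm{Ri}(R)) \subseteq K(R)$ is correct but not actually needed, since matching the $14$ children values across the already-matched level-$n$ values suffices without ever invoking the missing $15$th class.
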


\begin{proof}

It follows directly from the definition of $K_{n}'$ and the statement of Lemma \ref{prop: field of definition genus 2} that 
\begin{equation}\label{characterization of fields'} K_{n}' = K(\{R_{w}\})_{w \in |T|_{\leq n} \backslash \{w_{0}\}}), \end{equation}
from which it follows that
\begin{equation} K_{\infty}' = K(\{R_{w}\}_{w \in |T| \backslash \{w_{0}\}}). \end{equation}
Therefore, it suffices to show that for any decoration $\Psi$, $K(\{\Psi(w)\}_{w \in |T|_{n} \backslash \{w_{0}\}}) = K(\{R_{w}\}_{w \in |T|_{n} \backslash \{w_{0}\}}).$  Choose any decoration $\Psi$.  By Definition \ref{def: decoration genus 2} parts (a) and (b), the $15$ elements $\Psi(v)$ for $v \in |T|_{1}$ are representatives of all $15$ permutation equivalence classes of elements $R \in \mathcal{R}$ such that $|R| = \{\alpha_{i}\}_{i = 1}^{5} \cup \{\infty\}$.  It follows that there is a permutation $\sigma$ on $|T|_{1}$ such that for each $v \in |T|_{1}$, $\Psi(v) = R_{\sigma(v)}$.  Now assume inductively that for some $n \geq 1$, there is a permutation $\sigma$ on $|T|_{\leq n}$ which preserves distances between vertices (in particular, it acts on each $|T|_{i}$ for $1 \leq i \leq n$), such that $\Psi(w) = R_{\sigma(w)}$ for all $w \in |T|_{\leq n}$.  Now choose any $w \in |T|_{n}$.  By Definition \ref{def: decoration genus 2} parts (a) and (c), the $14$ elements $\Psi(u)$ for any $u$ such that $w = \tilde{u}$ coincide with the $14$ elements $R \in \bar{\mathcal{R}}$ such that $|R| = |\Ri(R_{w})|$.  It follows from the inductive assumption that for each such $u$, there is a unique $u'$ whose parent is $\sigma(w)$ such that $R_{w} = \Psi(u')$.  Extend $\sigma$ to be a permutation on $|T|_{n + 1}$ by assigning $\sigma(u) = u'$.  Since every vertex in $|T|_{n + 1}$ has its parent in $|T|_{n}$, it is clear that $\sigma$ is defined on $|T|_{n + 1}$; moreover, one can easily check that $\sigma$ is still a permutation which preserves distances between vertices.  Thus, we have the equalities 
$$K(\{\Psi(w)\}_{w \in |T|_{\leq n + 1} \backslash \{v_{0}\}}) = K(\{R_{\sigma(w)})\}_{w \in |T|_{\leq n + 1}} \backslash \{w_{0}\})$$
\begin{equation}\label{proving characterization of fields' genus 2} 
 = K(\{R_{w}\}_{w \in |T|_{\leq n + 1}} \backslash \{w_{0}\}), \end{equation}
and we are done.

\end{proof}

\begin{prop}\label{prop: field of definition of isogenies genus 2} With the above notation, 

a) the isogeny $\phi_{w}$ is defined over $K(N_{w})$, and $K_{w} \subseteq K(N_{w})$,

b) for all $n \geq 0$, $K_{n}' \subseteq K_{n}$, and equality holds for $n = 0, 1$.

\end{prop}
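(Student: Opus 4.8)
The plan is to prove part (a) by induction on $n$, with $w\in|T|_{n}$, and then deduce part (b) from part (a) and Proposition~\ref{prop: characterization of fields' genus 2}. Throughout I write $K(N)$ for the fixed field of the stabilizer of a finite subgroup $N<J[2^{\infty}]$ in $\Gal(\bar K/K)$, so that an isogeny out of $J$ with kernel $N$ is defined over a field $L$ exactly when $L\supseteq K(N)$; note also that $K(N)$ is unchanged if we replace $N$ by its image or preimage under a power of $[2]$ (which is $K$-rational), so there is no clash between $N_{w}$ and the group $N_{v_{n}}$ appearing in the construction of $\phi_{w}$ in \S4. By Lemma~\ref{prop: field of definition genus 2}, if $w=\{w_{0},\dots,w_{n}\}$ then $K_{w}\subseteq\tilde K_{w}=K(\{R_{w_{j}}\}_{j=1}^{n})$, and $\phi_{w}=\psi_{w_{n}}\circ\cdots\circ\psi_{w_{1}}$ is a priori defined over $\tilde K_{w}$; hence both assertions of (a) follow once we show $K(R_{w_{j}})\subseteq K(N_{w})$ for each $j$, i.e.\ that $\tilde K_{w}\subseteq K(N_{w})$. (For the first assertion of (a) alone there is also a clean direct argument: by Proposition~\ref{prop: proporties of w mapsto N_{w}}(b) the kernel $N_{w}$ is maximal Weil isotropic in $J[2^{n}]$ and is defined over $K(N_{w})$, so the quotient $J/N_{w}$ carries a principal polarization defined over $K(N_{w})$ — the Galois-equivariant form of iterating Lemma~\ref{prop: Weil pairing under isogeny} — and the quotient isogeny $J\to J/N_{w}$, having the same kernel as $\phi_{w}$, is a model of $\phi_{w}$ over $K(N_{w})$.)

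For the inductive proof of (a): the cases $n=0$ and $n=1$ are direct. For $n=0$, $\phi_{w_{0}}$ is the identity and $K_{w_{0}}=K=K(N_{w_{0}})$. For $w\in|T|_{1}$, $\phi_{w}$ is the Richelot isogeny attached to the maximal Weil isotropic subgroup $N_{w}<J[2]$, so by Theorem~\ref{prop: Richelot isogeny} and the formula (\ref{Richelot isogenous curve}) both $\phi_{w}$ and $C_{w}$ are defined over $K(R_{w})$, and $K(R_{w})=K(N_{w})$ because the correspondence of \S2 between maximal Weil isotropic subgroups of $J[2]$ and permutation classes $R$ with $|R|=B$ is visibly $\Gal(\bar K/K)$-equivariant (it sends $\{e_{\varnothing},e_{R_{1}},e_{R_{2}},e_{R_{3}}\}$ to $[(R_{1},R_{2},R_{3})]$). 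For the inductive step, let $w\in|T|_{n}$ with parent $\tilde w$; by construction $\phi_{w}=\psi_{w}\circ\phi_{\tilde w}$, where $\psi_{w}\colon J_{\tilde w}\to J_{w}$ is the Richelot isogeny whose kernel is the maximal Weil isotropic subgroup $\phi_{\tilde w}(N_{w})<J_{\tilde w}[2]$, corresponding to the class $R_{w}$. By the inductive hypothesis $\phi_{\tilde w}$, $J_{\tilde w}$ and $C_{\tilde w}$ are defined over $K(N_{\tilde w})$, so applying Theorem~\ref{prop: Richelot isogeny} over the base $K(N_{\tilde w})$ shows $\psi_{w}$, $J_{w}$, $C_{w}$, hence $\phi_{w}$, are defined over $K(N_{\tilde w})(R_{w})$. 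It remains to prove $K(N_{\tilde w})(R_{w})\subseteq K(N_{w})$.

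To establish this inclusion one shows that every $\sigma\in\Gal(\bar K/K)$ stabilizing $N_{w}$ also (i) stabilizes the parent subgroup $N_{\tilde w}$, and (ii) fixes the class $R_{w}$. Granting (i), statement (ii) follows: such a $\sigma$ fixes $K(N_{\tilde w})$, hence fixes $\phi_{\tilde w}$ as a morphism over $K(N_{\tilde w})$, so $\phi_{\tilde w}(N_{w})^{\sigma}=\phi_{\tilde w}^{\sigma}(N_{w}^{\sigma})=\phi_{\tilde w}(N_{w})$, and thus $\sigma$ stabilizes $\ker\psi_{w}$ and fixes the corresponding class $R_{w}$ by equivariance of the \S2 correspondence over $K(N_{\tilde w})$. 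Statement (i) — that the parent $N_{\tilde w}$ is recovered Galois-equivariantly from $N_{w}$ — is the heart of the matter, and I expect it to be the main obstacle. When $N_{w}$ is a free $\zz_{2}/2^{n}$-module of rank $2$ it is immediate, since then $N_{\tilde w}=2N_{w}=N_{w}\cap J[2^{n-1}]$ is intrinsic; in general one must argue, using the structural description of $N_{\tilde w}\lhd N_{w}$ in Proposition~\ref{prop: proporties of w mapsto N_{w}} together with the lattice picture of \S3 and the explicit shape of the Richelot isogeny $\psi_{w}$ (which forces $N_{\tilde w}=\phi_{w}^{-1}(\ker\psi_{w})$ to be the distinguished such subgroup), that $N_{\tilde w}$ is characterized intrinsically in terms of $N_{w}$ and the already-constructed isogeny data, and hence is stabilized by any $\sigma$ stabilizing $N_{w}$; pinning this down — in particular in the cases where $N_{w}$ fails to be a free rank-$2$ module, including the cases where $N_{w}$ contains $J[2]$ — is the delicate step requiring the most care.

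Finally, for part (b): by Proposition~\ref{prop: characterization of fields' genus 2}(a) and the identification (\ref{characterization of fields'}) we have $K_{n}'=K(\{R_{w}\}_{w\in|T|_{\leq n}\setminus\{w_{0}\}})$. The inclusion $K_{n}'\subseteq K_{n}$ follows from part (a): for $w\in|T|_{m}$ with $1\leq m\leq n$, the group $\Gal(\bar K/K_{n})$ fixes $J[2^{n}]\supseteq N_{w},N_{\tilde w}$ pointwise, and since $K(N_{\tilde w})\subseteq K_{n}$ it fixes $\phi_{\tilde w}$, hence $\phi_{\tilde w}(N_{w})$, hence $R_{w}$; so $K(R_{w})\subseteq K_{n}$ for every such $w$, giving $K_{n}'\subseteq K_{n}$. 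Equality for $n=0$ is trivial. For $n=1$, $K_{1}'=K(\{R_{w}\}_{w\in|T|_{1}})$ is the fixed field of the subgroup of $\Gal(K_{1}/K)$ stabilizing each of the $15$ maximal Weil isotropic subgroups of $J[2]$; but an $\ff_{2}$-linear symplectic automorphism of $J[2]$ which stabilizes every maximal isotropic subgroup is trivial, since every nonzero $P\in J[2]$ is contained in (at least two) maximal isotropic subgroups and any two distinct ones meet exactly in $\{0,P\}$, so $P$ is fixed. Hence the automorphism fixing all $15$ classes already acts trivially on $J[2]$, its fixed field is all of $K_{1}=K(J[2])$, and $K_{1}'=K_{1}$.
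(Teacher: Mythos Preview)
Your treatment of part (b) and of the equality $K_{1}'=K_{1}$ matches the paper's. The difficulty lies entirely in part (a), and it is one you have manufactured by reinterpreting the notation $K(N_{w})$.

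You take $K(N)$ to be the fixed field of the \emph{setwise} stabilizer of $N$ in $\Gal(\bar K/K)$. The paper, however, uses $K(N_{w})$ in the ordinary sense: the field obtained by adjoining the coordinates of the points of $N_{w}$, i.e.\ the fixed field of the \emph{pointwise} stabilizer. This is confirmed by how the paper deduces (b) from (a): it argues that since the groups $N_{w}$ for $w\in|T|_{\leq n}$ generate $J[2^{n}]$, the compositum of the fields $K(N_{w})$ equals $K_{n}$ --- an argument that is only valid under the pointwise reading. With that reading, the inclusion $K(N_{\tilde w})\subseteq K(N_{w})$ is immediate from $N_{\tilde w}\subset N_{w}$ (Proposition~\ref{prop: proporties of w mapsto N_{w}}(c)), and the inductive step is two lines: $\phi_{\tilde w}$ is defined over $K(N_{\tilde w})\subseteq K(N_{w})$, so $\phi_{\tilde w}(N_{w})$ is defined over $K(N_{w})$, whence the Richelot isogeny $\psi_{w}$ and therefore $\phi_{w}=\psi_{w}\circ\phi_{\tilde w}$ are defined over $K(N_{w})$. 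There is no ``delicate step''.

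Under your stronger reading, the step you flag as ``the heart of the matter'' --- that any $\sigma$ stabilizing $N_{w}$ setwise must stabilize $N_{\tilde w}$ --- is not merely delicate but in general should fail. As the remark following Proposition~\ref{prop: proporties of w mapsto N_{w}} notes, distinct vertices $w,w'\in|T|$ can satisfy $N_{w}=N_{w'}$, so $N_{\tilde w}$ is \emph{not} an intrinsic invariant of the group $N_{w}$; this happens precisely in the non-free cases you single out (for instance when $N_{w}\supseteq J[2]$). Hence there is no reason a Galois element stabilizing $N_{w}$ should stabilize the particular $N_{\tilde w}$ attached to the path $w$, and your sketch does not supply one. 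The remedy is simply to adopt the intended (pointwise) meaning of $K(N_{w})$, after which the obstacle disappears and your argument coincides with the paper's.
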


\begin{proof}

First of all, we have shown in the proof of \ref{prop: field of definition genus 2} that $\phi_{w}$ is defined over $K(N_{w})$ for $w \in |T|_{1}$, and that in this case, $K_{w} = \tilde{K}_{w}$ is the subfield of $K_{1}$ fixed by all automorphisms $\sigma \in \Gal(K_{1} / K)$ which stabilize $N_{w}$.  It follows that $K_{w} \subseteq K(N_{w})$, and part (a) is proven in the case that $n = 1$.  Now by Proposition \ref{prop: characterization of fields' genus 2}(a), $K_{1}' = K(\{R_{w}\}_{w \in |T|_{1}})$.  The fact that this is contained in $K(\{\alpha_{i}\}_{i = 1}^{6}) = K_{1}$ follows immediately from the fact that $|R_{w}| = \{\alpha_{i}\}_{i = 1}^{6}$ for each $w \in |T|_{1}$.  Since $K_{1}'$ is the compositum of all such subfields $K(R_{w})$, and each $K(R_{w})$ is the subfield of $K_{1}$ fixed by all automorphisms $\sigma \in \Gal(K_{1} / K)$ which stabilize $N_{w}$, it follows that $K_{1}'$ is the subfield of $K_{1}$ fixed by the elements of $\Gal(K_{1} / K)$ which stabilize all maximal Weil isotropic subgroups $N_{w} < J[2]$.  But the only such Galois element is the identity, so $K_{1}' = K_{1}$.  This proves equality in the $n = 1$ case of the statement of part (b) (the equality in the $n = 0$ case is trivial).  Thus, all claims are proven for $n = 1$.

Now assume inductively that for some $n \geq 1$ and all $w \in |T|_{n}$, $\phi_{w}$ is defined over $K(N_{w})$ and $K_{w} \subseteq K(N_{w})$.  Choose any $w \in |T|_{n + 1}$.  We may apply the inductive assumption to $\tilde{w}$, since $\tilde{w} \in |T|_{n}$.  Since $N_{w}$ is defined over $K(N_{w})$ and $\phi_{\tilde{w}}$ is defined over $K(N_{\tilde{w}}) \subseteq K(N_{w})$, it follows that $\phi_{\tilde{w}}(N_{w})$ is defined over $K(N_{w})$.  Now the Richelot isogeny $\psi_{w} : J_{\tilde{w}} \to J_{w}$ is defined over $K_{\tilde{w}}(\phi_{\tilde{w}}(N_{w})) \subseteq K(N_{w})$ by Theorem \ref{prop: Richelot isogeny}(a), so $\phi_{w} = \psi_{w} \circ \phi_{\tilde{w}}$ is defined over $K(N_{w})$.  Moreover, since $J_{w}$ is the image of $J_{\tilde{w}}$ under $\psi_{w}$, $J_{w}$ is defined over $K(N_{w})$.  Since $K_{w}$ is the extension of $K$ over which $C_{w}$ (and hence also $J_{w}$) are defined, it follows that $K_{w} \subseteq K(N_{w})$, thus proving part (a).

Now part (a) and the fact that $K_{n}'$ is the compositum of the fields $K_{w}$ for all $w \in |T|_{\leq n} \backslash \{w_{0}\}$ imply that $K_{n}'$ is contained in the compositum of the extensions $K(N_{w})$ for all $w \in |T|_{\leq n} \backslash \{w_{0}\}$.  Since $\{N_{w}\}_{w \in |T|_{\leq n}}$ clearly generates $J[2^{n}]$, this compositum is $K_{n}$.  Thus, $K_{n}' \subseteq K_{n}$, which is the statement of (b).

\end{proof}

For any Galois element $\sigma \in \Gal(\bar{K} / K)$ and any element $R = [(R_{1}, R_{2}, R_{3})] \in \bar{\mathcal{R}}$, we define $R^{\sigma} = [(R_{1}^{\sigma}, R_{2}^{\sigma}, R_{3}^{\sigma})] \in \bar{\mathcal{R}}$ by letting $R_{i}^{\sigma}$ be the cardinality-$2$ set obtained by letting $\sigma$ act on the elements of $R_{i}$, for $i = 1, 2, 3$ (with the convention that $\sigma$ fixes $\infty$).  It is clear that this action of $\Gal(\bar{K} / K)$ on $\bar{\mathcal{R}}$ is well defined.

Next we want to determine how the absolute Galois group of $K$ acts on the $R_{w}$'s defined above.  In order to describe this Galois action, we will adopt the following notation.  Recall that $\rho_{2} : \Gal(\bar{K} / K) \to \GSp(T_{2}(J))$ is the continuous homomorphism induced by the natural Galois action on $T_{2}(J)$.  Moreover, for each $n \geq 0$, let $\bar{\rho}_{2}^{(n)} : \Gal(K_{n} / K) \to \GSp(J[2^{n}])$ denote the homomorphism induced by the natural Galois action on $J[2^{n}]$.  Meanwhile, the automorphism group $\Aut_{\qq_{2}}(V_{2}(J))$ acts on the set of rank-$4$ $\zz_{2}$-lattices in $V_{2}(E)$ by left multiplication.  The Galois equivariance of the Weil pairing implies that the image of $\rho_{2}$ in $\Aut_{\qq_{2}}(V_{2}(J))$ acts on $|S|$.  It is straightforward to check that this action preserves adjacency of the vertices.  Thus, the Galois group acts on the set of non-backtracking paths starting at $v_{0}$ in $S$, and so it acts on the universal covering graph $T$.  We denote this action $\Gal(\bar{K} / K) \times |T| \to |T|$ by $(\sigma, w) \mapsto w^{\sigma}$.  Similarly, the image of $\rho_{2}^{(n)}$ in $\GSp(J[2^{n}])$ acts on $|T|_{\leq n}$, since non-backtracking paths of length $n$ in $S$ correspond to series of subgroups of $J[2^{n}]$.  Thus, the group $\Gal(K_{n} / K)$ acts on $|T|_{\leq n}$, and we denote this action $\Gal(K_{n} / K) \times |T|_{\leq n} \to |T|_{\leq n}$ by $(\sigma, w) \mapsto w^{\sigma | K_{n}}$.  Note that it follows from the construction of the map $w \mapsto N_{w}$ that $N_{w}^{\sigma} = N_{w^{\sigma}}$, and that if $w \in |T|_{\leq n}$, then $N_{w} < J[2^{n}]$ and $N_{w}^{\sigma |_{K_{n}}} = N_{w^{\sigma | K_{n}}}$.

\begin{lemma}\label{prop: Galois action on decoration genus 2}

For any $\sigma \in \Gal(\bar{K} / K)$ and $w \in |T|$, we have $R_{w}^{\sigma} = R_{w^{\sigma}}$ up to permutation equivalence.  If $w \in |T|_{\leq n}$, then $R_{w}^{\sigma |_{K_{n}}} = R_{w^{\sigma | K_{n}}}$.

\end{lemma}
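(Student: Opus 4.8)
The plan is to prove Lemma \ref{prop: Galois action on decoration genus 2} by induction on the distance $n$ of $w$ from the root $w_0$, using the explicit description of each $R_w$ in terms of kernels of Richelot isogenies together with the Galois-equivariance of the entire construction (Weil pairing, Richelot isogeny formulas, the correspondence between maximal Weil isotropic subgroups and permutation classes). Since the second assertion ($R_w^{\sigma|_{K_n}} = R_{w^{\sigma|K_n}}$ for $w \in |T|_{\leq n}$) follows from the first by restricting the action to $\Gal(K_n/K)$ and using that $N_w^{\sigma|_{K_n}} = N_{w^{\sigma|K_n}}$ as already noted in the text, it suffices to prove the first statement for all $\sigma \in \Gal(\bar K/K)$ and all $w \in |T|$.

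For the base case $w \in |T|_1$: here $N_w < J[2]$ is a maximal Weil isotropic subgroup, and $R_w$ is obtained by applying $\Ri$ to the permutation class $[(R_1,R_2,R_3)]$ associated to $N_w$ via the correspondence of \S2, where $N_w = \{e_\varnothing, e_{R_1}, e_{R_2}, e_{R_3}\}$. Since $\sigma$ permutes the divisors $e_{R_i}$ according to its action on the subsets $R_i \subset B$, and since $N_w^\sigma = N_{w^\sigma}$, the permutation class attached to $N_{w^\sigma}$ is exactly $[(R_1^\sigma, R_2^\sigma, R_3^\sigma)]$. It then remains to observe that $\Ri$ commutes with the Galois action on $\bar{\mathcal R}$: this is immediate from the fact that the formula for $\Ri$ in terms of the operator $A \mapsto A^\vee$ (Proposition \ref{prop: Richelot properties}(a)) and the maps $M, N$ is built out of field operations and hence is $\Gal(\bar K/K)$-equivariant, so $\Ri(R)^\sigma = \Ri(R^\sigma)$. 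Thus $R_w^\sigma = \Ri([(R_1,R_2,R_3)])^\sigma = \Ri([(R_1^\sigma,R_2^\sigma,R_3^\sigma)]) = R_{w^\sigma}$.

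For the inductive step, suppose the claim holds for $\tilde w \in |T|_n$ and let $w \in |T|_{n+1}$ with parent $\tilde w$. Recall that $R_w$ is the permutation class attached to the maximal Weil isotropic subgroup $\phi_{\tilde w}(N_w) < J_{\tilde w}[2]$, where $\phi_{\tilde w} : J \to J_{\tilde w}$ is the isogeny of \S4 with kernel $N_{\tilde w}$, defined over $K(N_{\tilde w})$ by Proposition \ref{prop: field of definition of isogenies genus 2}(a). Applying $\sigma$ to everything in sight: $\phi_{\tilde w}^\sigma : J \to J_{\tilde w}^\sigma = J_{\tilde w^\sigma}$ is an isogeny with kernel $N_{\tilde w}^\sigma = N_{\tilde w^\sigma}$, and since by the inductive hypothesis $R_{\tilde w}^\sigma = R_{\tilde w^\sigma}$ (so the curves $C_{\tilde w^\sigma}$ and $C_{\tilde w}^\sigma$ have the same branch locus and the construction of \S4 genuinely produces $J_{\tilde w^\sigma}$ as the $\sigma$-conjugate of $J_{\tilde w}$), we may identify $\phi_{\tilde w}^\sigma$ with $\phi_{\tilde w^\sigma}$ up to the natural identifications. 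Then $\phi_{\tilde w^\sigma}(N_{w^\sigma}) = \phi_{\tilde w}^\sigma(N_w^\sigma) = (\phi_{\tilde w}(N_w))^\sigma$ as subgroups of $J_{\tilde w^\sigma}[2]$, and because the correspondence between maximal Weil isotropic subgroups of $J_{\tilde w^\sigma}[2]$ and permutation classes with $|R| = $ the branch locus is Galois-equivariant (the divisors $e_U$ defining $J_{\tilde w}[2]$ transform by $\sigma$ acting on the branch points), the permutation class attached to $(\phi_{\tilde w}(N_w))^\sigma$ is the $\sigma$-conjugate of the one attached to $\phi_{\tilde w}(N_w)$; that is, $R_{w^\sigma} = R_w^\sigma$, completing the induction.

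The main obstacle, and the point requiring the most care, is making precise the identification of $\phi_{\tilde w}^\sigma$ with $\phi_{\tilde w^\sigma}$: one must check that applying $\sigma$ to the output of the (fixed, recursive) construction in \S4 for the vertex $\tilde w$ yields exactly the output of that same construction for the vertex $\tilde w^\sigma$. This hinges on two things: first, that the Richelot isogeny construction of Theorem \ref{prop: Richelot isogeny} — the curve $C'$ of equation (\ref{Richelot isogenous curve}) and the isogeny formula (\ref{explicit Richelot isogeny}) — is entirely given by rational formulas in the coefficients of $f$ and the chosen quadratic splitting, hence commutes with $\sigma$; and second, that the choice of quadratic splitting realizing a given maximal Weil isotropic subgroup transforms correctly under $\sigma$, which is exactly the content of the \S2 correspondence being Galois-equivariant. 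Both are essentially bookkeeping once one unwinds the definitions, so the argument is routine in substance but needs to be stated carefully to avoid circularity with the inductive hypothesis on $R_{\tilde w}$.
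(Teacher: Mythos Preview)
Your approach is essentially the paper's: induction on the level $n$, carrying along the Galois-equivariance of the isogenies $\phi_w$ so that $\phi_{\tilde w^{\sigma}}(N_{w^{\sigma}}) = (\phi_{\tilde w}(N_w))^{\sigma}$, then reading off $R_{w^{\sigma}} = R_w^{\sigma}$ from the Galois-equivariance of the correspondence between maximal Weil isotropic subgroups and permutation classes. The paper is slightly more explicit in strengthening the inductive hypothesis to include $C_{w^{\sigma}} = C_w^{\sigma}$, $J_{w^{\sigma}} = J_w^{\sigma}$, and $\phi_{w^{\sigma}} = \phi_w^{\sigma}$ at each stage (checked via the explicit formula (\ref{explicit Richelot isogeny})), which is what makes your ``identification of $\phi_{\tilde w}^{\sigma}$ with $\phi_{\tilde w^{\sigma}}$'' rigorous; you allude to this in your final paragraph, and that is indeed the correct way to close the loop.

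There is, however, a definitional slip in your base case. For $w \in |T|_1$, the element $R_w$ is \emph{not} obtained by applying $\Ri$ to the permutation class $[(R_1,R_2,R_3)]$ associated to $N_w$; rather, $R_w$ \emph{is} that permutation class $[(R_1,R_2,R_3)]$ itself (so that $|R_w| = \{\alpha_i\}_{i=1}^5 \cup \{\infty\}$, in accordance with Definition~\ref{def: decoration genus 2}(b)). With the correct definition, the base case reduces immediately to $e_{R_i}^{\sigma} = e_{R_i^{\sigma}}$ and $N_w^{\sigma} = N_{w^{\sigma}}$, exactly as in the paper, and your detour through ``$\Ri$ commutes with Galois'' becomes unnecessary at that step. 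Once this is corrected the argument goes through and matches the paper's.
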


\begin{proof}

Choose any $\sigma \in \Gal(\bar{K} / K)$.  We will prove that $R_{w}^{\sigma} = R_{w^{\sigma}}$ for all $w \in |T|_{n}$ for each $n \geq 1$.

First, let $w \in |T|_{1}$.  Then $N_{w}$ is the maximal Weil isotropic subgroup of $J[2]$ corresponding to $R_{w}$; in other words, if $R_{w} = (R_{1}, R_{2}, R_{3})$, then $N_{w} = \{e_{\varnothing}, e_{R_{1}}, e_{R_{2}}, e_{R_{3}}\}$, using the notation of \S3.  Note that $e_{R_{i}}^{\sigma} = e_{R_{i}^{\sigma}}$.  So 
\begin{equation}\label{proving Galois action on decoration genus 2}
N_{w^{\sigma}} = N_{w}^{\sigma} = \{e_{\varnothing}, e_{R_{1}^{\sigma}}, e_{R_{2}^{\sigma}}, e_{R_{3}^{\sigma}}\},
\end{equation}
and $R_{w}^{\sigma} = [(R_{1}^{\sigma}, R_{2}^{\sigma}, R_{3}^{\sigma})]$ is the corresponding element of $\bar{\mathcal{R}}$.  This proves the first statement for $n = 1$.  Moreover, from the construction in Theorem \ref{prop: Richelot isogeny}, this implies that $C_{w^{\sigma}} = C_{w}^{\sigma}$, and so $J_{w^{\sigma}} = J_{w}^{\sigma}$.  Moreover, one can check from the explicit definition of the Richelot isogeny in \S3 that $\phi_{w^{\sigma}} = \phi_{w}^{\sigma}$.

Now choose $n \geq 2$ and assume inductively that for all $w \in |T|_{n - 1}$, $R_{w^{\sigma}} = R_{w}^{\sigma}$, as well as the analogous statements for $C_{w^{\sigma}}$, $J_{w^{\sigma}}$, and $\phi_{w^{\sigma}}$.  Choose $w \in |T|_{n}$; then $\tilde{w} \in |T|_{n - 1}$ and we may apply the inductive assumptions to $\tilde{w}$.  Then we have 
\begin{equation}\label{proving Galois action on decoration2 genus 2}
\phi_{\tilde{w}^{\sigma}}(N_{w^{\sigma}}) = \phi_{\tilde{w}}^{\sigma}(N_{w}^{\sigma}) = (\phi_{\tilde{w}}(N_{w}))^{\sigma}.
\end{equation}
So $\psi_{w^{\sigma}}$ is the Richelot isogeny corresponding to the maximal Weil isotropic subgroup $(\phi_{\tilde{w}}(N_{w}))^{\sigma} < J_{\tilde{w}}^{\sigma}[2]$.  Then by a similar argument as was used in the $n = 1$ case, $R_{w^{\sigma}} = R_{w}^{\sigma}$.  Again, from the construction in Theorem \ref{prop: Richelot isogeny}, this implies that $C_{w^{\sigma}} = C_{w}^{\sigma}$, and so $J_{w^{\sigma}} = J_{w}^{\sigma}$.  Moreover, again one can check from the explicit definition of the Richelot isogeny in \S3 that $\psi_{w^{\sigma}} = \psi_{w}^{\sigma} : J_{\tilde{w}}^{\sigma} \to J_{w}^{\sigma}$.  Thus, $\phi_{w^{\sigma}} = \psi_{w^{\sigma}} \circ \phi_{\tilde{w}^{\sigma}} = \psi_{w}^{\sigma} \circ \phi_{\tilde{w}}^{\sigma} = \phi_{w}^{\sigma}$, as desired.

Now let $w \in |T|_{\geq n}$.  Then $R_{w}$ is fixed by all elements of $\Gal(\bar{K} / K_{n})$, and one checks from the definitions that $w_{\sigma} = w_{\sigma | K_{n}}$ for any $\sigma \in \Gal(\bar{K} / K)$.  Thus, the second statement follows from the first.

\end{proof}

It is an immediate corollary of Proposition \ref{prop: field of definition of isogenies genus 2}(b) that $\Gal(K_{n}' / K_{1}')$ is a quotient of $\Gal(K_{n} / K_{1})$ for all $n \geq 1$.  The following key proposition characterizes the kernel $\Gal(K_{n} / K_{n}')$.

\begin{prop}\label{prop: key lemma}

For all $n \geq 1$, the image of $\Gal(K_{n} / K_{n}')$ under $\bar{\rho}_{2}^{(n)}$ coincides with the subgroup of scalar automorphisms in $\bar{G}^{(n)}$.

\end{prop}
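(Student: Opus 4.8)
The plan is to show the two inclusions separately. The easier direction is that every scalar automorphism in $\bar{G}^{(n)}$ lies in the image of $\Gal(K_n/K_n')$. A scalar $c \in (\zz/2^n\zz)^\times$ acting on $J[2^n]$ stabilizes every subgroup of $J[2^n]$, in particular every maximal Weil isotropic subgroup $N_w$ for $w \in |T|_{\leq n}$. Hence, if $\sigma \in \Gal(K_n/K)$ has $\bar{\rho}_2^{(n)}(\sigma)$ scalar, then $N_{w^{\sigma|K_n}} = N_w^{\sigma|K_n} = N_w$ for all such $w$, so by Lemma \ref{prop: Galois action on decoration genus 2} we get $R_w^{\sigma|K_n} = R_{w^{\sigma|K_n}} = R_w$ for all $w \in |T|_{\leq n} \setminus \{w_0\}$. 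By Proposition \ref{prop: characterization of fields' genus 2}(a) (or rather equation (\ref{characterization of fields'})), $K_n'$ is generated over $K$ precisely by the entries of the $M(R_w)$, which are fixed by $\sigma$; so $\sigma$ fixes $K_n'$, i.e. $\sigma \in \Gal(K_n/K_n')$.

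For the reverse inclusion, suppose $\sigma \in \Gal(K_n/K_n')$; I must show $\bar{\rho}_2^{(n)}(\sigma)$ is scalar on $J[2^n]$. Since $\sigma$ fixes $K_n'$, equation (\ref{characterization of fields'}) gives that $\sigma$ fixes every $R_w$ for $w \in |T|_{\leq n}$; by Lemma \ref{prop: Galois action on decoration genus 2} this means $w^{\sigma|K_n} = $ (a vertex with the same associated $R_w$), and more usefully $N_w^{\sigma|K_n} = N_w$ for every such $w$. So $\bar{\rho}_2^{(n)}(\sigma)$ is an element of $\bar{G}^{(n)} \subseteq \GSp(J[2^n])$ which stabilizes every maximal Weil isotropic subgroup $N_w < J[2^n]$ arising from the tree. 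The key point is that these $N_w$ are sufficiently numerous to force the automorphism to be scalar: for $n = 1$ one invokes Lemma \ref{prop: 15 subgroups} together with the elementary fact that an element of $\GSp_4(\ff_2)$ stabilizing all $15$ maximal isotropic subspaces is the identity (as was already used in the proof of Proposition \ref{prop: field of definition of isogenies genus 2}(b)). For $n \geq 2$ I would argue inductively: an automorphism stabilizing all the $N_w$ for $w \in |T|_{\leq n}$ in particular stabilizes all the depth-$n$ ones, whose reductions mod $2^{n-1}$ (and whose intersections $N_w \cap J[2^{n-1}]$, resp. the submodules $2N_w$) recover enough maximal isotropic subgroups of $J[2^{n-1}]$ and, via the Richelot-isogeny structure of the tree (Proposition \ref{prop: proporties of w mapsto N_{w}} and Lemma \ref{prop: Weil pairing under isogeny}), enough of the layer below. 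Concretely, reducing modulo $2$ the condition "stabilizes all $N_w$, $w \in |T|_{\leq n}$" will force $\bar{\rho}_2^{(n)}(\sigma) \equiv c \pmod 2$ for some scalar $c$; replacing $\sigma$ by an element differing from it by the Galois element acting as the scalar $c$ (which exists in $\bar{G}^{(n)}$ since $-1 \in \Gamma(2) \subseteq G$ and $\chi_2$ is surjective, so all scalars congruent to $c$ mod $2$ are available after adjusting), one reduces to the case $\bar{\rho}_2^{(n)}(\sigma) \in \Gamma(2)$, i.e. $\bar{\rho}_2^{(n)}(\sigma) = 1 + 2A$ for some $A$, and then one shows $A$ must itself be scalar by examining its action on the $N_w$ that reduce to a fixed maximal isotropic subgroup of $J[2]$.

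The main obstacle is this last step: transforming "stabilizes all the tree-theoretic maximal isotropic subgroups of $J[2^n]$" into "is scalar" at higher levels. The subtlety is that, as the remark after Proposition \ref{prop: proporties of w mapsto N_{w}} warns, distinct vertices can give the same $N_w$ and some $N_w$ may even contain $J[2]$, so one cannot naively say "all" maximal isotropic subgroups of $J[2^n]$ occur. I expect the cleanest route is to show that the $N_w$, $w \in |T|_{\leq n}$, include, for each maximal isotropic $\bar{N} < J[2]$, a full set of lifts to $J[2^n]$ — this follows from $15$-regularity of $S$ (Proposition \ref{prop: S is 15-regular}) and the description of adjacency in its proof — and then a linear-algebra lemma over $\zz/2^n\zz$: an element of $\GSp_4(\zz/2^n\zz)$ stabilizing all maximal isotropic lifts of every maximal isotropic subspace mod $2$ is scalar. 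That lemma, proved by induction on $n$ peeling off the $\Gamma(2^{n-1})/\Gamma(2^n)$ layer, is where the real work lies; everything else is bookkeeping with Lemma \ref{prop: Galois action on decoration genus 2} and equation (\ref{characterization of fields'}).
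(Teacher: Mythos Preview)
Your setup for both inclusions is right and matches the paper's: via Proposition \ref{prop: characterization of fields' genus 2}(a) and Lemma \ref{prop: Galois action on decoration genus 2}, an element $\sigma$ fixes $K_{n}'$ if and only if $\bar{\rho}_{2}^{(n)}(\sigma)$ fixes every vertex of $|T|_{\leq n}$, hence stabilizes every $N_{w}$. The divergence is entirely in how you pass from ``stabilizes all these maximal isotropic subgroups'' to ``is scalar.''

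The paper does this in one stroke, with no induction on $n$ and no peeling off of congruence layers. The observation you are missing is: if $P \in J[2^{n}]$ has exact order $2^{n}$, then the intersection of all maximal Weil isotropic subgroups of $J[2^{n}]$ containing $P$ is precisely $\langle P \rangle$. (Extend $P$ to a symplectic basis $e_{1} = P, e_{2}, f_{1}, f_{2}$; already $\langle e_{1}, e_{2} \rangle \cap \langle e_{1}, f_{2} \rangle = \langle e_{1} \rangle$.) Hence any $\zeta$ stabilizing all such subgroups sends each $P$ of maximal order into $\langle P \rangle$, i.e.\ $\zeta(P) = c_{P} P$ for some unit $c_{P} \in (\zz/2^{n}\zz)^{\times}$; an endomorphism of a free $\zz/2^{n}\zz$-module sending every generator to a unit multiple of itself is a scalar. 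This replaces your entire inductive program.

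Your concern that not every maximal isotropic $N \subset J[2^{n}]$ arises as some $N_{w}$ is legitimate, but it dissolves once you see that only those $N$ containing a fixed $P$ of order $2^{n}$ are needed in the argument above. Such an $N$ \emph{cannot} contain $J[2]$: with respect to $\bar{e}_{2}^{(n)}$ one has $J[2]^{\perp} = J[2^{n-1}]$, so $J[2] \subseteq N$ would force $N \subseteq J[2^{n-1}]$, contradicting $P \in N$. And every maximal isotropic subgroup of $J[2^{n}]$ not containing $J[2]$ is $N_{v}$ for the vertex $v \in |S|$ with $m(v) = n$; the connectedness half of the proof of Proposition \ref{prop: S is 15-regular} produces a non-backtracking path of length exactly $n$ from $v_{0}$ to $v$, so $N = N_{w}$ for some $w \in |T|_{n}$. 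Thus all the subgroups the argument actually uses really are among the $N_{w}$, and the linear-algebra lemma over $\zz/2^{n}\zz$ you sketched is unnecessary.
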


\begin{proof}

Fix $n \geq 1$.  Since $K_{n}' \supseteq K_{1}$ for each $n \geq 1$, we only need to consider the Galois subgroup $\Gal(K_{n} / K_{1})$.  Part (a) of Proposition \ref{prop: characterization of fields' genus 2}, with the help of Lemma \ref{prop: Psi is a decoration genus 2}, implies that $K_{n}'$ is generated over $K_{1}$ by the entries of the matrices $M(R_{w})$ for all $w \in |T|_{\leq n} \setminus \{w_{0}\}$.  Note that a Galois automorphism fixes all the entries of $M(R_{w})$ if and only if it fixes $R_{w}$.  Therefore, the elements of $\Gal(K_{\infty} / K_{1})$ which fix $K_{n}'$ are exactly those which fix all of the permutation equivalence classes $R_{w} \in \bar{\mathcal{R}}$ for all $w \in |T|_{\leq n} \backslash \{v_{0}\}$.  Lemma \ref{prop: Galois action on decoration genus 2} implies that, for any $\sigma \in \Gal(K_{n} / K_{1})$, $R_{w}^{\sigma |_{K_{n}}} = R_{w^{\sigma | K_{n}}}$, so the Galois automorphisms in $\Gal(K_{n} / K_{1})$ which fix $R_{w}$ for all $w \in |T|_{\leq n}$ are the ones sent by $\bar{\rho}_{2}^{(n)}$ to the elements of $\GSp(J[2^{n}])$ that fix all vertices in $|T|_{\leq n}$.  Let $\zeta \in \GSp(J[2^{n}])$ be such an automorphism.  Then all maximal Weil isotropic subgroups of $J[2^{n}]$ are stable under $\zeta$.  Let $P$ be an element of order $2^{n}$ in $J[2^{n}]$.  Then $\zeta$ stabilizes the intersection of all maximal Weil isotropic subgroups of $J[2^{n}]$ which contain $P$, which is $\langle P \rangle$.  So $\zeta$ takes $P$ to an odd scalar multiple of $P$ for all $P$ of order $2^{n}$ in $J[2^{n}]$ (and hence for all $P \in J[2^{n}]$).  But the endomorphisms of the $\zz / 2^{n}\zz$-module $J[2^{n}]$ which take every element to an odd scalar multiple of itself are scalar automorphisms in $\GSp(J[2^{n}])$.  Conversely, scalar automorphisms of $G$ fix all vertices of $T$, and so $\bar{\rho}_{2}^{(n)}$ maps $\Gal(K_{n} / K_{n}')$ onto the subgroup of scalar automorphisms in $G$.

\end{proof}

From now on, for ease of notation, we set $a_{i,j} := \alpha_{i} - \alpha_{j}$ for $1 \leq i < j \leq 5$.  For each $a_{i,j}$, choose an element $\sqrt{a_{i,j}} \in \bar{K}$ whose square is $a_{i,j}$.  Also, for $r \in \zz_{2}$, we will write $r \in \GSp(T_{2}(J))$ (resp. $r \in \GSp(J[2^{n}])$ for some $n$) for the scalar endomorphism of $\GSp(T_{2}(J))$ (resp. $\GSp(J[2^{n}])$) which acts by multiplying each element by $r$ (resp. $r$ modulo $2^{n}$).  Recall that $G$ contains the principal congruence subgroup $\Gamma(2) \lhd \Sp(T_{2}(J))$, and therefore, $\{\pm 1\} \lhd G \cap \Sp(T_{2}(J))$.

The following proposition, together with Proposition \ref{prop: characterization of fields' genus 2}(b), gives essentially the statement of Theorem \ref{prop: main theorem genus 2}.

\begin{prop}\label{prop: main quadratic extension genus 2}

The subextension $K_{\infty}'(\mu_{2}) \supset K(\mu_{2})$ corresponds to the subgroup $\{\pm 1\} \lhd G \cap \Sp(T_{2}(J)) \cong \Gal(K_{\infty} / K(\mu_{2}))$.  In fact, 
\begin{equation}\label{main quadratic extension} K_{\infty} = K_{\infty}'(\sqrt{a_{i, j}})(\mu_{2}), \end{equation}
for any $1 \leq i < j \leq 5$, and the Galois automorphism corresponding to $-1$ acts by sending $\sqrt{a_{i,j}}$ to $-\sqrt{a_{i,j}}$.

\end{prop}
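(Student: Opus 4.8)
The plan is to leverage Proposition~\ref{prop: key lemma}, which identifies the image of $\Gal(K_{n} / K_{n}')$ under $\bar{\rho}_{2}^{(n)}$ with the scalar automorphisms in $\bar{G}^{(n)}$, and pass to the inverse limit. First I would argue that, passing to the limit over $n$, the image of $\Gal(K_{\infty} / K_{\infty}')$ under $\rho_{2}$ is precisely the group of scalar automorphisms in $G \subseteq \GSp(T_{2}(J))$, i.e.\ those $r \in \zz_{2}^{\times}$ such that $r$ (acting as a scalar) lies in $G$. Since $G$ contains $\Gamma(2)$ and hence $-1$, and since the cyclotomic character $\chi_{2}$ accounts for the similitude factor, the scalars in $G$ that lie in $\Sp(T_{2}(J))$ are exactly $\{\pm 1\}$. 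Intersecting with $\Sp(T_{2}(J)) \cong \Gal(K_{\infty} / K(\mu_{2}))$ via Galois equivariance of the Weil pairing, one concludes that $\Gal(K_{\infty} / K_{\infty}'(\mu_{2}))$ maps onto $\{\pm 1\}$, and that this is a normal (indeed central) subgroup of $G \cap \Sp(T_{2}(J))$; this gives the first assertion that $K_{\infty}'(\mu_{2})$ corresponds to $\{\pm 1\}$.

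Next I would establish equation~\eqref{main quadratic extension}. Because $K_{\infty} / K_{\infty}'(\mu_{2})$ is a degree-$2$ extension (the quotient $\{\pm 1\}$ has order $2$, using that $-1 \notin \Gamma(2)$ is nontrivial mod $2$, hence acts nontrivially), it suffices to exhibit a single element of $K_{\infty}$, not in $K_{\infty}'(\mu_{2})$, whose square lies in $K_{\infty}'(\mu_{2})$, and to show the automorphism attached to $-1$ negates it. The natural candidate is $\sqrt{a_{i,j}} = \sqrt{\alpha_{i} - \alpha_{j}}$: since $K_{1} = K(\{\alpha_{i}\})$ already lies in $K_{\infty}$, all the $a_{i,j}$ lie in $K_{1} \subseteq K_{\infty}$, and one knows classically (e.g.\ via the theory of $2$-torsion on hyperelliptic Jacobians, or the formulas of \cite{mumford1984tata}) that the $\sqrt{a_{i,j}}$ generate a multiquadratic subextension of $K_{\infty}$ — in fact $K_{1}(\{\sqrt{a_{i,j}}\}) \subseteq K_{2}$. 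The key computation is that the scalar $-1 \in \GSp$ acts on this subextension by negating each $\sqrt{a_{i,j}}$: this can be checked by writing a $4$-torsion point $P$ with $2P = e_{\{\alpha_{i},\alpha_{j}\}}$ explicitly (its coordinates involve $\sqrt{a_{i,j}}$, or a ratio thereof) and observing that $-1$ sends $P \mapsto -P$, which swaps the two square roots.

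The main obstacle, I expect, is twofold. First, one must carefully justify the limiting step from Proposition~\ref{prop: key lemma} to a statement about $\Gal(K_{\infty}/K_{\infty}')$ — specifically, that an element of $\Gal(K_{\infty}/K_{\infty}')$ whose restriction to each $K_{n}$ is scalar is itself sent to a scalar in $G$, and conversely; this is a compactness/coherence argument using that the scalar subgroups of the $\bar{G}^{(n)}$ form a compatible system. Second, and more delicate, is pinning down that $\sqrt{a_{i,j}} \notin K_{\infty}'(\mu_{2})$ and that it \emph{does} lie in $K_{\infty}$ with the prescribed $-1$-action: the containment $\sqrt{a_{i,j}} \in K_{\infty}$ requires identifying it inside the $4$-torsion field (equivalently, showing $K_{1}(\{\sqrt{a_{i,j}}\}_{i,j}) \subseteq K_{2}$), and the non-containment in $K_{\infty}'(\mu_{2})$ follows from Proposition~\ref{prop: key lemma} once we know $-1$ acts nontrivially on $\sqrt{a_{i,j}}$. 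I would handle the action of $-1$ by a direct local computation with a chosen $4$-torsion divisor class, say $P = [(x_{0}, y_{0}) - (\alpha_{k}, 0)]$ with $2P = e_{\{\alpha_{i}, \alpha_{j}\}}$, using the explicit description of $J[2^{n}]$-points on genus-$2$ Jacobians; the ratio $y_{0}$ (up to factors in $K_{1}$) is, up to a square in $K_{\infty}'$, a product involving $\sqrt{a_{i,j}}$, and $P \mapsto -P$ acts on $y_{0}$ by a sign, yielding the claim. Finally, the independence of $\eqref{main quadratic extension}$ from the choice of $i, j$ follows since any two $\sqrt{a_{i,j}}, \sqrt{a_{k,l}}$ differ by an element of $K_{\infty}'(\mu_{2})$ — indeed $\sqrt{a_{i,j}}\sqrt{a_{k,l}} \in K_{\infty}'(\mu_2)$ because $-1$ fixes the product — so they generate the same quadratic extension.
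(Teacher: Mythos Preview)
Your proposal is correct and follows the paper's overall strategy: invoke Proposition~\ref{prop: key lemma} at each finite level, pass to the limit to identify $\Gal(K_{\infty}/K_{\infty}'(\mu_{2}))$ with $\{\pm 1\} \subset \Sp(T_{2}(J))$, and then exhibit $\sqrt{a_{i,j}}$ as a generator of the resulting quadratic extension. Where you diverge from the paper is in how you verify that the automorphism corresponding to $-1$ actually negates $\sqrt{a_{i,j}}$. The paper does this \emph{indirectly}: it performs an explicit Richelot computation to determine that $K_{2}' = K_{1}(\{\sqrt{a_{i,j}a_{l,m}}\}_{\{i,j\}\neq\{l,m\}})$, cites \cite{yelton2014images} for $\sqrt{a_{i,j}} \in K_{2}$, and then (implicitly via Kummer theory, using the algebraic independence of the $\alpha_{i}$) concludes $\sqrt{a_{i,j}} \in K_{2} \setminus K_{2}'$, so the nontrivial element of $\Gal(K_{2}/K_{2}') \cong \{\pm 1\}$ must negate it. Your proposed route --- writing down an explicit $4$-torsion point $P$ with $2P = e_{\{\alpha_{i},\alpha_{j}\}}$ and tracing $P \mapsto -P$ through its coordinates --- is a legitimate alternative and arguably more direct, though the coordinate bookkeeping would likely be comparable in difficulty; the paper's method has the side benefit of producing an explicit description of $K_{2}'$.

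One small slip: you wrote ``$-1 \notin \Gamma(2)$ is nontrivial mod $2$'', but in fact $-1 \in \Gamma(2)$ since $-1 \equiv 1 \pmod{2}$. What you need is that $-1$ is nontrivial in $\Gamma(2)/\Gamma(4)$ (equivalently, that it acts nontrivially on $J[4]$), which is what forces $[K_{\infty}:K_{\infty}'(\mu_{2})] = 2$ rather than $1$.
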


\begin{proof}

If we replace $K$ with $K(\mu_{2})$, it will suffice to assume that $K$ contains all $2$-power roots of unity and to prove that $K_{\infty} = K_{\infty}'(\sqrt{a_{i, j}})$ for any $1 \leq i < j \leq 5$, and that the Galois element corresponding to $-1 \in \Sp(T_{2}(J))$ acts as claimed.

Let $\sigma$ be a Galois automorphism of $K_{\infty}$ over $K_{\infty}'$.  By Proposition \ref{prop: key lemma}, $\bar{\rho}_{2}^{(n)}$ must be a scalar automorphism of $\mathrm{Sp}(J[2^{n}])$ for every $n$ and therefore must be a scalar automorphism in $\Sp(T_{2}(J))$.  But the only nonidentity scalar matrix in $\Sp(T_{2}(J))$ is $-1$.  Conversely, Proposition \ref{prop: key lemma} implies that $-1 \in \Gal(K_{\infty} / K_{\infty}')$, hence the first statement.

It immediately follows that $K_{\infty}$ is generated over $K_{\infty}'$ by any element of $K_{\infty}$ which is not fixed by the Galois automorphism $\sigma$ such that $\rho_{2}(\sigma) = -1 \in \Sp(T_{2}(J))$.  Clearly, $\bar{\rho}_{2}^{(2)}(\sigma |_{K_{2}}) = -1 \in \Gamma(2) / \Gamma(4)$.  But setting $n = 2$ in the statement of Proposition \ref{prop: key lemma} implies that 
\begin{equation}\label{Gal(K_{2} / K_{2}') genus 2}
\Gal(K_{2} / K_{2}') \cong \{\pm 1\} \lhd \Gamma(2) / \Gamma(4),
\end{equation}
so any element in $K_{2} \backslash K_{2}'$ will not be fixed by $-1$.  In order to find such an element, we proceed to compute generators for $K_{2}'$ over $K$.  Choose $w \in |T|_{1}$ such that $R_{w}$ is permutation equivalent to $\{\{\alpha_{1}, \alpha_{2}\}, \{\alpha_{3}, \alpha_{4}\}, \{\alpha_{5}, \infty\}\}$.  A tedious but straightforward computation shows that $|\Ri(R_{w})|$, which is the set of Weierstrass roots of the degree-$6$ curve $C_{w}$, consists of the elements 
$$\frac{-\alpha_{1}\alpha_{2} + \alpha_{3}\alpha_{4} \pm \sqrt{(\alpha_{1} - \alpha_{3})(\alpha_{1} - \alpha_{4})(\alpha_{2} - \alpha_{3})(\alpha_{2} - \alpha_{4})}}{-\alpha_{1} - \alpha_{2} + \alpha_{3} + \alpha_{4}},$$
$$\alpha_{5} \pm \sqrt{(\alpha_{1} - \alpha_{5})(\alpha_{2} - \alpha_{5})},\ \ \alpha_{5} \pm \sqrt{(\alpha_{3} - \alpha_{5})(\alpha_{4} - \alpha_{5})}.$$
It follows that $K_{1}(\{R_{u})\}_{u \in |T|_{2}, w = \tilde{u}})$ is the extension of $K_{1}$ obtained by adjoining the square root terms in the above expressions.  By permuting the $\alpha_{i}$'s and using the fact that $K_{1}(\{R_{u}\}_{u \in |T|_{2}}) = K_{2}'$ by Proposition \ref{prop: characterization of fields' genus 2}(a), we see that 
\begin{equation}\label{description of K_{2}' genus 2}
K_{2}' = K_{1}(\{\sqrt{(a_{i,j}a_{l,m})}\}_{\{i, j\} \neq \{l, m\}}).
\end{equation}
Choose $i, j$ such that $1 \leq i < j \leq 5$.  It follows from (\ref{description of K_{2}' genus 2}) that $\sqrt{a_{i,j}} \notin K_{2}'$, although $\sqrt{a_{i,j}} \in K_{2}$ by Proposition 2.4 of \cite{yelton2014images}.  So $\sqrt{a_{i,j}} \in K_{\infty} \setminus K_{\infty}'$ and can be used to generate $K_{\infty}$ over $K_{\infty}'$.  Since $-1$ does not fix $\sqrt{a_{i,j}}$ but does fix its square $a_{i,j} \in K_{1} \subset K_{\infty}'$, it follows that $-1$ acts by sending $\sqrt{a_{i,j}}$ to $-\sqrt{a_{i,j}}$.

\end{proof}

\section*{Acknowledgments}

The author would like to thank Yuri Zarhin for discussions that were extremely helpful in producing this work.  The author would also like to thank Mihran Papikian for suggesting revisions that helped to improve the exposition.

\bibliographystyle{plain}
\bibliography{bibfile}

\end{document}